\newtheorem{theorem}{Theorem}[section]
\newtheorem{lemma}[theorem]{Lemma}
\theoremstyle{definition}
\theoremstyle{remark}
\newtheorem{remark}[theorem]{Remark}
\numberwithin{equation}{section}
\newcommand{\rr}{{\mathbb R}}
\newcommand{\rd}{{\mathbb R^d}}
\newcommand{\nat}{{\mathbb N}}
\newcommand{\ganz}{{\mathbb Z}}
\newcommand{\Exp}{{\mathbb E}}
\newcommand{\Ima}{\operatorname{Im}}
\newcommand{\card}{\operatorname{card}}
\newcommand{\LIM}{\operatorname{LIM}}
\newcommand{\eqd}{\stackrel{\rm d}{=}}
\DeclareMathOperator{\diml}{\underline{dim}}
\DeclareMathOperator{\dimu}{\overline{dim}}
\begin{document}

\sloppy
\title[The dimension of the St.\ Petersburg game]{The dimension of the St.\ Petersburg game} 
\author{Peter Kern}
\address{Peter Kern, Mathematisches Institut, Heinrich-Heine-Universit\"at D\"usseldorf, Universit\"atsstr. 1, D-40225 D\"usseldorf, Germany}
\email{kern\@@{}math.uni-duesseldorf.de}

\author{Lina Wedrich}
\address{Lina Wedrich, Mathematisches Institut, Heinrich-Heine-Universit\"at D\"usseldorf, Universit\"atsstr. 1, D-40225 D\"usseldorf, Germany}
\email{wedrich\@@{}math.uni-duesseldorf.de} 

\date{\today}

\begin{abstract}
Let $S_n$ be the total gain in $n$ repeated St.\ Petersburg games. It is known that $n^{-1}(S_n-n\log_2n)$ converges in distribution to a random element $Y(t)$ along subsequences of the form $k(n)=2^{p(n)}t(n)$ with $p(n)=\lceil\log_2k(n)\rceil\to\infty$ and $t(n)\to t\in[\frac12,1]$. We determine the Hausdorff and box-counting dimension of the range and the graph for almost all sample paths of the stochastic process $\{Y(t)\}_{t\in[1/2,1]}$. The results are compared to the fractal dimension of the corresponding limiting objects when gains are given by a deterministic sequence initiated by Hugo Steinhaus.
\end{abstract}

\keywords{St.\ Petersburg game, semistable process, sample path, semi-selfsimilarity, range, graph, Hausdorff dimension, Steinhaus sequence, iterated function system, self-affine set}
\subjclass[2010]{Primary 60G17; Secondary 28A78, 28A80, 60G18, 60G22, 60G52.}

\maketitle

\baselineskip=18pt

\section{Introduction}

The famous St.\ Petersburg game is easily formulated as a simple coin tossing game. The player's gain $Y=2^T$ in a single game can be expressed by means of the stopping time $T=\inf\{n\in\nat:\,X_n=1\}$ of repeated independent tosses $(X_n)_{n\in\nat}$ of a fair coin until it first lands heads. For a sequence of gains $(Y_n)_{n\in\nat}$ in independent St.\ Petersburg games the partial sum $S_n=\sum_{k=1}^nY_k$ denotes the total gain in the first $n$ games. To find a fair entrance fee for playing the game is commonly called the St.\ Petersburg problem, frequently raised to the status of a paradox. Since the expectation $\Exp[Y]=\infty$ is infinite, a fair premium cannot be constructed by the help of the usual law of large numbers. We refer to Jorland \cite{Jor} and Dutka \cite{Dut} for the history of the St.\ Petersburg game and for early solutions of the 300 year old problem. 

The first step towards a mathematically satisfactory solution has been achieved by Feller \cite{Fel1,Fel2} who showed that a time-dependent premium can fulfill a certain weak law of large numbers
$$\frac{S_n}{n\log_2n}\to1\quad\text{ in probability,}$$
where $\log_2$ denotes logarithm to the base $2$. However, Feller's result does not tell if the game is dis- or advantageous for the player, i.e.\ if $S_n-n\log_2n$ is likely to be negative or positive. This question can only be answered by a weak limit theorem and the first theorem of this kind has been  shown by Martin-L\"of \cite{Mar} for the subsequence $k(n)=2^n$
$$\frac{S_{k(n)}-k(n)\log_2k(n)}{k(n)}\to X\quad\text{ in distribution.}$$
The limit $X$ is infinitely divisible with characteristic function $\exp(\psi(y))$, where
$$\psi(y)=\int_{0+}^\infty e^{iyx}-1-iyx\cdot 1_{\{x\leq1\}}\,d\phi(x)$$
and the L\'evy measure $\phi$ is concentrated on $2^\ganz$ with $\phi(\{2^k\})=2^{-k}$ for $k\in\ganz$. Hence $X$ is a semistable random variable and the corresponding L\'evy process $\{X(t)\}_{t\geq0}$ with $X(1)\eqd X$ is a (non-strictly) semistable L\'evy process fulfilling the semi-selfsimilarity condition
$$X(2^kt)\eqd2^k(X(t)+kt)\quad\text{ for every }k\in\ganz\text{ and }t\geq0.$$
For details on semistable random variables and L\'evy processes we refer to the monographs \cite{MS,Sat}. The nature of semistability is that there exists in general a continuum of possible limit distributions. For the St.\ Petersburg game the possible limit distributions have been characterized by Cs\"org\H{o} and Dodunekova \cite{CD} who proved that for any subsequence $k(n)\to\infty$ with
$$2^{-\lceil\log_2k(n)\rceil}k(n) \to t\in[\tfrac12,1]$$
we have
$$\frac{S_{k(n)}-k(n)\log_2k(n)}{k(n)}\to Y(t)=t^{-1}(X(t)-t\log_2t)\quad\text{ in distribution,}$$
where $Y(\frac12)\eqd Y(1)\eqd X$; cf.\ also \cite{Cso,Var1}.

The object of our study are local fluctuations of the sample paths of the stochastic process $Y=\{Y(t)\}_{t\in[\frac12,1]}$ consisting of all the possible weak limits of normalized total gains in repeated St.\ Petersburg games. Figure \ref{pb} shows typical (approximative) sample paths of $\{Y(t)\}_{t\in[\frac12,1]}$ generated by $n=2^{16}$ simulated St.\ Petersburg games.
\begin{figure}[h]
\includegraphics[scale=0.3]{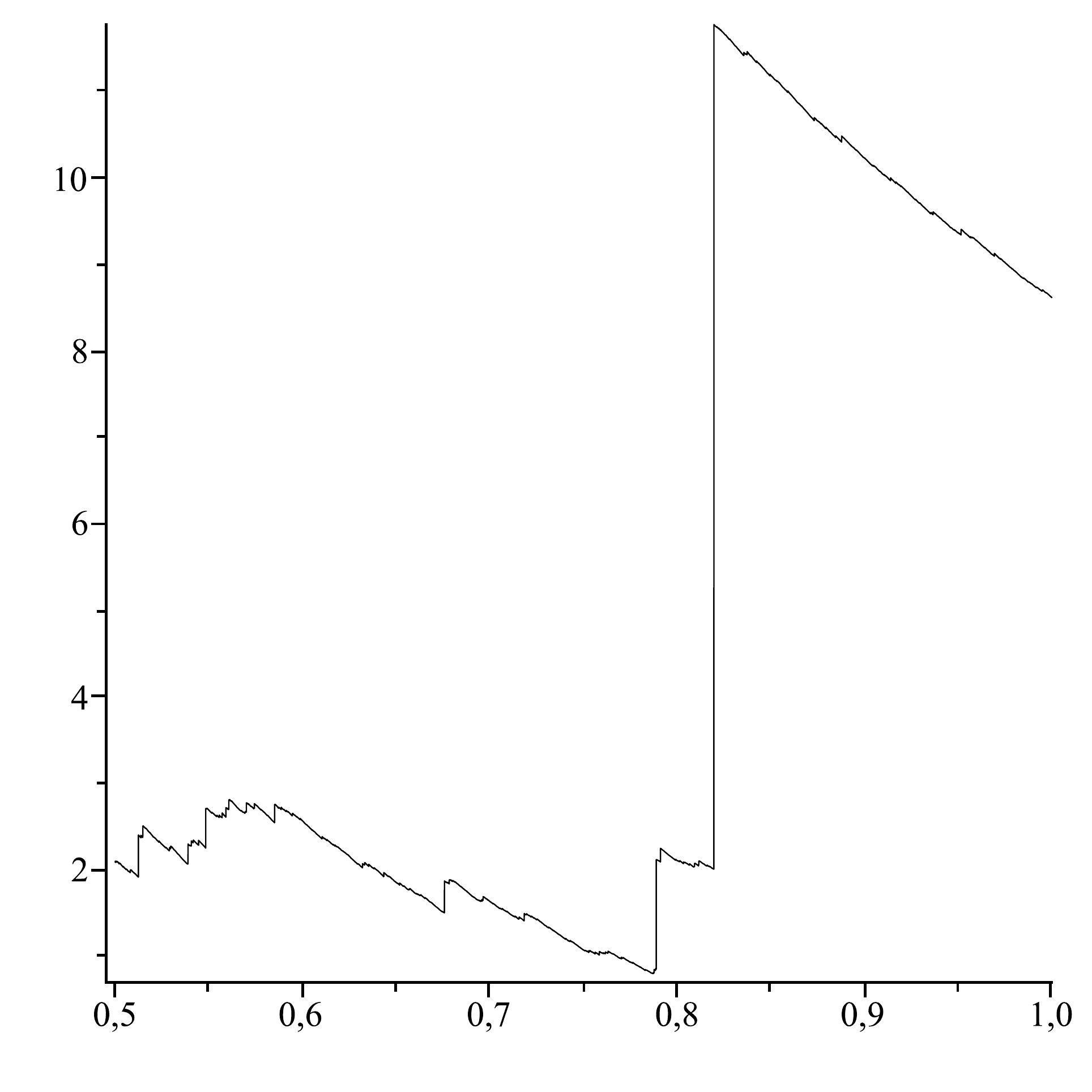}
\includegraphics[scale=0.3]{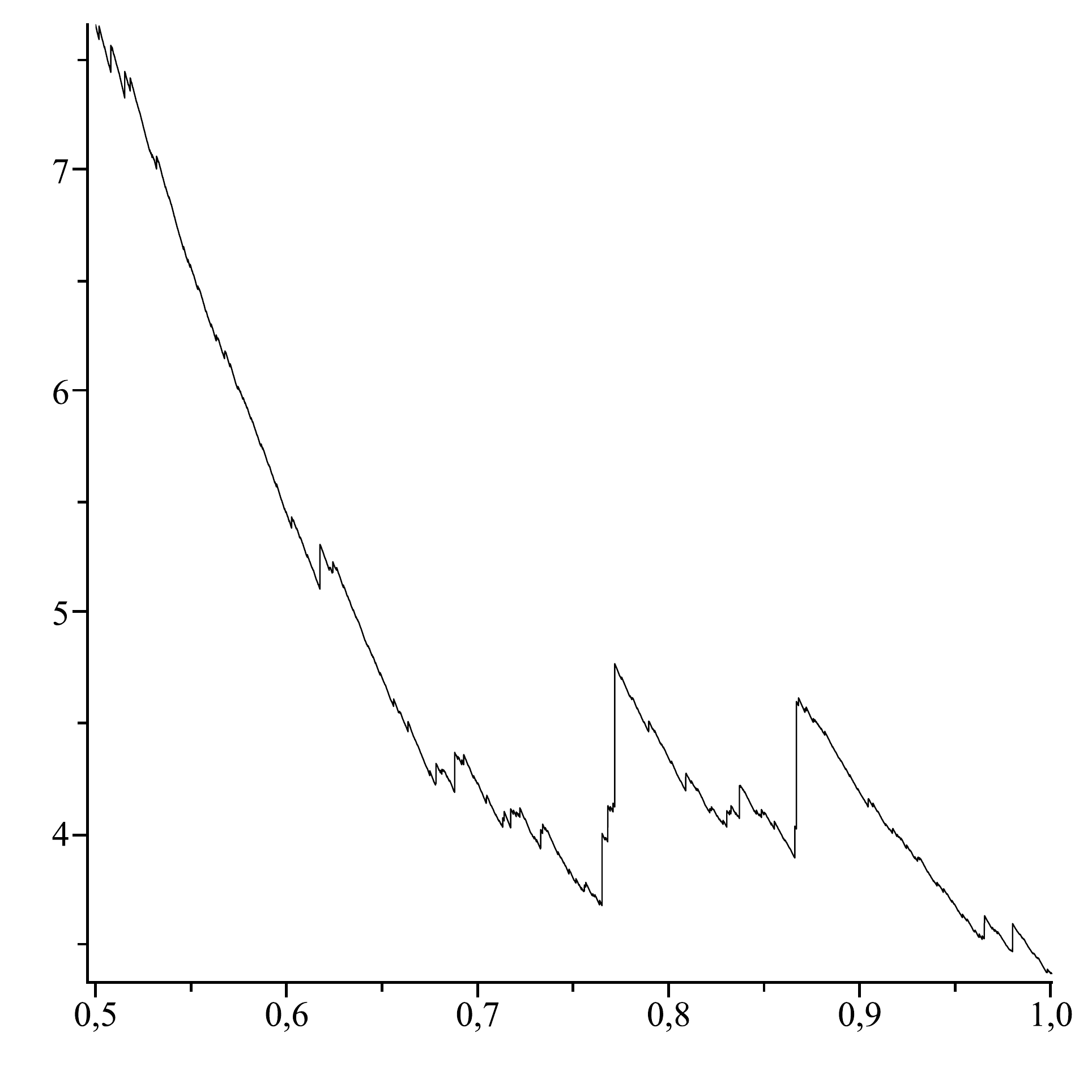}
\includegraphics[scale=0.3]{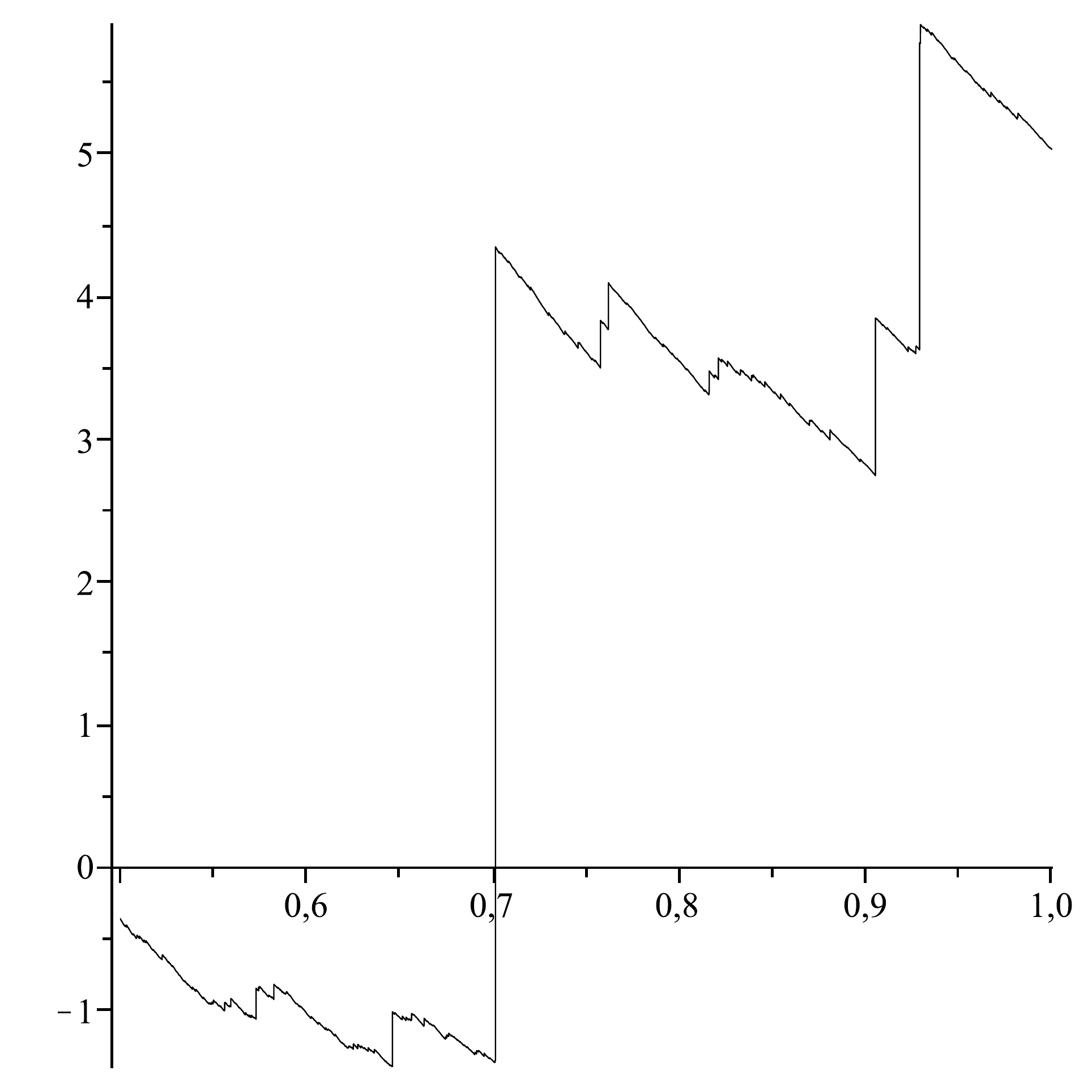}
\includegraphics[scale=0.3]{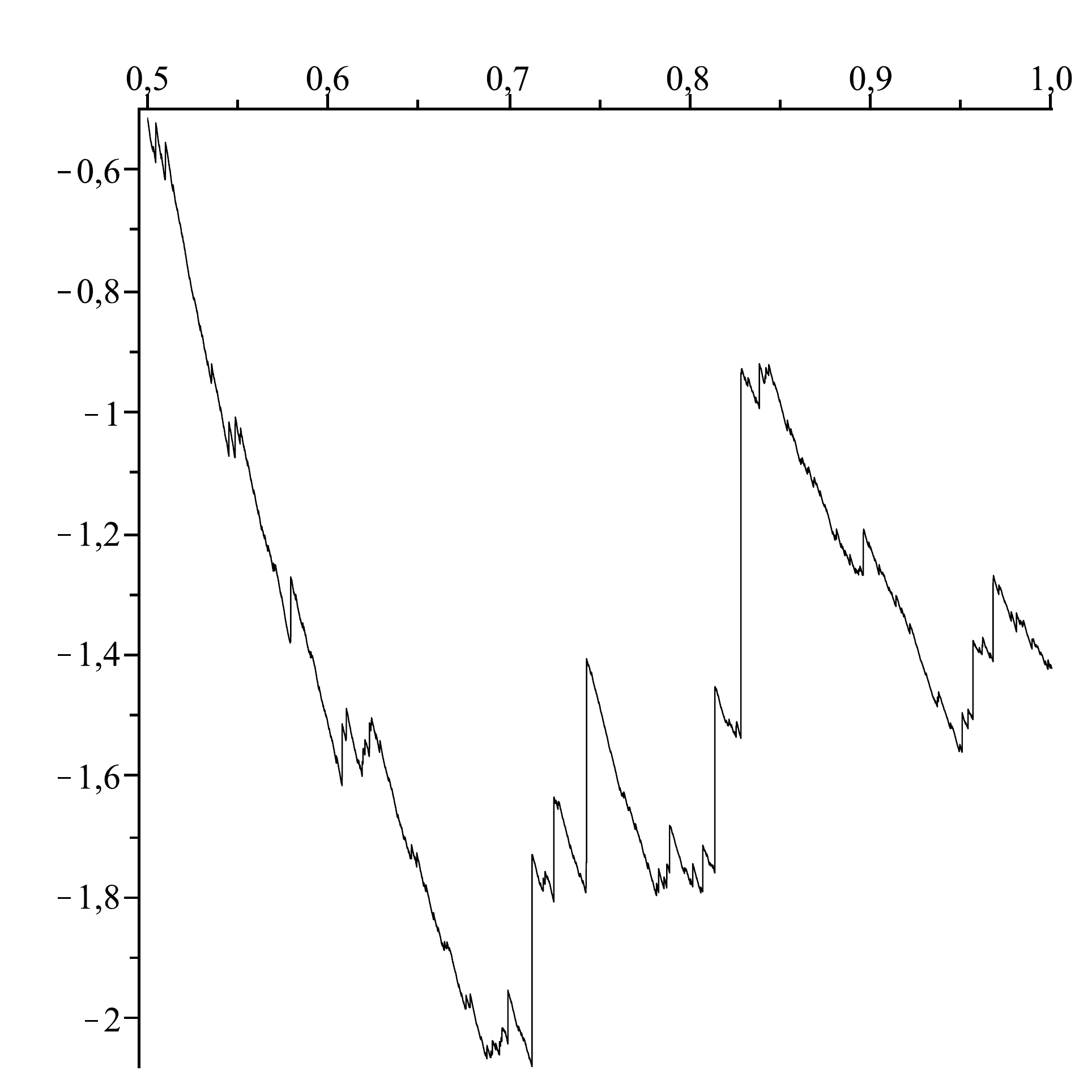}
\caption{\small Simulation of four approximations to the sample paths of $Y$. For better visibility the jumps are shown as vertical lines.}
\label{pb}
\end{figure}
Note that the sample paths do only have upward jumps due to the fact that the L\'evy measure $\phi$ is concentrated on $2^\ganz$.

The main goal of our paper is to determine the Hausdorff dimension of the range $Y([\frac12,1])=\{Y(t):\,t\in[\frac12,1]\}$ and the graph $G_Y([\frac12,1])=\{(t,Y(t)):\,t\in[\frac12,1]\}$ of the stochastic process $Y$ encoding all the possible distributional limits of St.\ Petersburg games. For an arbitrary subset $F\subseteq\rd$ the $s$-dimensional Hausdorff measure is defined as
$$
\mathcal{H}^s(F)=\lim_{\delta\rightarrow0}\inf\left\{\sum_{i=1}^{\infty}|F|_i^s: |F_i|\leq\delta \mbox{ and } F\subseteq\bigcup_{i=1}^\infty F_i\right\},
$$
where $|F|=\sup\{\|x-y\|:x,y\in F\}$ denotes the diameter of a set $F\subseteq\mathbb{R}^d$. It can now be shown that there exists a unique value $\dim_{\rm H} F\geq 0$ so that $\mathcal{H}^s(F)=\infty$ for all $0\leq s<\dim_{\rm H} F$ and $\mathcal{H}^s(F)=0$ for all $s>\dim_{\rm H} F$. This critical value is called the Hausdorff dimension of $F$. Specifically, we have
$$
\dim_{\rm H} F=\inf\left\{s:\mathcal{H}^s(F)=0\right\}=\sup\left\{s:\mathcal{H}^s(F)=\infty\right\}.
$$
For details on the Hausdorff dimension we refer to \cite{Fal3, Mat}.

An alternative fractal dimension is the so called box-counting dimension (see, e.g., \cite{Fal3}). For this purpose let $N_\delta(F)$ be the smallest number of closed balls of radius $\delta$ that cover the set $F\subseteq\rd$. The lower and the upper box-counting dimensions of an arbitrary set $F\subseteq\rd$ are now defined as
$$
 \diml_{\rm B} F=\liminf_{\delta\rightarrow0}\frac{\log N_\delta(F)}{-\log\delta}\quad\text{ and }\quad
 \dimu_{\rm B} F=\limsup_{\delta\rightarrow0}\frac{\log N_\delta(F)}{-\log\delta}
$$
and the box-counting dimension of $F$ is given by
\begin{align*}
 \dim_{\rm B} F=\lim_{\delta\rightarrow0}\frac{\log N_\delta(F)}{-\log\delta}
\end{align*}
provided that this limit exists. The different fractal dimensions are related as follows:
\begin{equation}\label{dimrel}
\dim_{\rm H} F\leq\diml_{\rm B}F\leq\dimu_{\rm B} F\leq d.
\end{equation}
Note that there are plenty of sets $F\subseteq\rd$ where these inequalities are strict.

In Section 2 we will determine the Hausdorff and box-counting dimension of the range $Y([\frac12,1])$ and the graph $G_Y([\frac12,1])$ for almost all sample paths of the stochastic process $Y$. Additionally, in Section 3  we will also consider a deterministic sequence introduced by Steinhaus \cite{Ste} which is called the ``{\it Steinhaus sequence}'' according to \cite{CS1}. The Steinhaus sequence $(x_n)_{n\in\nat}$ is defined by $x_n=2^k$ if $n=2^{k-1}+m\cdot 2^k$ for some $k\in\nat$ and $m\in\nat_0$. Alternatively, as in Vardi \cite{Var2}, one can define $x_n$ to be twice the highest power of $2$ dividing $n$. The Steinhaus sequence is explicitly given by
$$2,4,2,8,2,4,2,16,2,4,2,8,2,4,2,32,2,4,2,8,2,4,2,16,2,4,2,8,2,4,2,64,\ldots$$ 
and has relative frequencies $\lim_{n\to\infty}n^{-1}\card\{1\leq j\leq n:\,x_j=2^k\}=2^{-k}$ for $k\in\nat$. The sequence $(x_n)_{n\in\nat}$ has been considered as time-dependent entrance fees for repeated St.\ Petersburg games in \cite{Ste,CS1} and has been proven to be a sequence of nearly asymptotically fair premiums in a certain sense. For details we refer to \cite{CS1}. In contrast to \cite{Ste,CS1} we will consider the Steinhaus sequence as a sequence of possible gains in repeated St.\ Petersburg games.
Again, we will determine the Hausdorff and box-counting dimension of the range and the graph of the specific sample path of $Y$ resulting as a limiting object of the Steinhaus sequence. To do so, we will employ results for iterated function systems as presented in \cite{Fal4}.

\section{Hausdorff dimension of the St.\ Petersburg game}

\subsection{Hausdorff dimension of the range}

In this section we evaluate the Hausdorff dimension of the range of the stochastic process $Y=\{Y(t)\}_{t\in[\frac{1}{2},1]}$. We employ common techniques used to calculate Hausdorff dimensions of selfsimilar L\'evy processes (see \cite{Xiao,MX,KW}) and adapt them to our situation. Note that the given process $Y$ is neither a L\'evy process nor does it have the selfsimilarity property of a semistable process. The result is stated in the theorem below.
\begin{theorem}\label{dimrangeR}
 We have $\dim_{\rm H} Y([\frac{1}{2},1])=1$ almost surely.
\end{theorem}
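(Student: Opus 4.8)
The upper bound $\dim_{\rm H}Y([\tfrac12,1])\le 1$ is immediate, since $Y([\tfrac12,1])\subseteq\rr$. For the lower bound the plan is to use the energy (Frostman) method: it suffices to show that for every $s\in[0,1)$ one has, almost surely,
$$\int_{1/2}^1\!\!\int_{1/2}^1|Y(t)-Y(u)|^{-s}\,dt\,du<\infty .$$
Indeed, the left-hand side is the $s$-energy of the push-forward of Lebesgue measure on $[\tfrac12,1]$ under $Y$, a positive finite measure carried by the range $Y([\tfrac12,1])$, so its finiteness forces $\dim_{\rm H}Y([\tfrac12,1])\ge s$; letting $s\uparrow1$ along a countable sequence then yields $\dim_{\rm H}Y([\tfrac12,1])\ge1$ almost surely. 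By Tonelli's theorem it is enough to prove the pointwise bound $\Exp|Y(t)-Y(u)|^{-s}\le C(s)\,|t-u|^{-s}$ for $\tfrac12\le u<t\le1$, because $\iint_{[1/2,1]^2}|t-u|^{-s}\,dt\,du<\infty$ when $s<1$.

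To obtain that bound I would exploit the (non-stationary) increment structure of $Y$. Writing $Y(t)-Y(u)=t^{-1}\bigl(X(t)-X(u)\bigr)+D$ with $D:=(t^{-1}-u^{-1})X(u)-(\log_2t-\log_2u)$, the increment $X(t)-X(u)$ is independent of $\mathcal F_u:=\sigma(X(r):r\le u)$ and $X(t)-X(u)\eqd X(t-u)$, so conditioning on $\mathcal F_u$ gives $\Exp|Y(t)-Y(u)|^{-s}\le\sup_{a\in\rr}\Exp|t^{-1}X(t-u)+a|^{-s}$. Using $\Exp|W|^{-s}=s\int_0^\infty r^{-s-1}\mathbb P(|W|\le r)\,dr$ together with $\mathbb P(|t^{-1}X(h)+a|\le r)\le Q(X(h);2tr)$, where $Q(Z;\rho):=\sup_{x\in\rr}\mathbb P(|Z-x|\le\rho)$ denotes the concentration function, the whole estimate reduces to the bound
$$Q(X(h);\rho)\le\min\bigl(1,\,C\rho/h\bigr)\qquad\text{for } h\in(0,\tfrac12],\ \rho>0;$$
inserting it, splitting the $r$-integral at $r\sim h$ and using $t\in[\tfrac12,1]$ gives $\sup_a\Exp|t^{-1}X(t-u)+a|^{-s}\le C(s)(t-u)^{-s}$, as required.

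The remaining point, and the place where the semistable structure is genuinely used, is the concentration bound. Applying the classical smoothing inequality $Q(\mu;\rho)\le C\rho\int_{-1/\rho}^{1/\rho}|\widehat\mu(y)|\,dy$ to the distribution of $X(h)$, and recalling $|\Exp e^{iyX(h)}|=e^{h\operatorname{Re}\psi(y)}$, one gets $Q(X(h);\rho)\le C\rho\int_{\rr}e^{h\operatorname{Re}\psi(y)}\,dy$. The semi-selfsimilarity relation $X(2t)\eqd2(X(t)+t)$ translates into $\psi(2y)=2\psi(y)-2iy$, hence $\operatorname{Re}\psi(2y)=2\operatorname{Re}\psi(y)$; since $\operatorname{Re}\psi(y)=\sum_{k\in\ganz}2^{-k}(\cos(2^ky)-1)$ is continuous on $\rr$ and strictly negative on $\rr\setminus\{0\}$ (the summands with $k\to-\infty$ rule out $\operatorname{Re}\psi(y)=0$ for $y\ne0$), the $1$-periodic function $\theta\mapsto2^{-\theta}\operatorname{Re}\psi(2^{\theta})$ is continuous and bounded above by some $-c_0<0$, so $\operatorname{Re}\psi(y)\le-c_0|y|$ for all $y$. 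Then $Q(X(h);\rho)\le C\rho\int_{\rr}e^{-c_0h|y|}\,dy=2C\rho/(c_0h)$, and $Q(X(h);\rho)\le1$ trivially, which is exactly the claimed bound.

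I expect the main obstacle to be making this last step rigorous and uniform — deriving $\psi(2y)=2\psi(y)-2iy$ cleanly from semi-selfsimilarity, verifying the continuity and strict negativity of $\operatorname{Re}\psi$ away from the origin so that the constant $c_0$ can be extracted, and then carefully propagating the conditioning and the occupation-measure bookkeeping through a process $Y$ that is neither a L\'evy process nor exactly semi-selfsimilar. Everything else (the Frostman criterion, the elementary $r$-integral splitting, Tonelli) is routine.
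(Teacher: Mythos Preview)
Your proposal is correct and follows the same high-level strategy as the paper --- the trivial upper bound plus a Frostman energy estimate for the lower bound --- but the technical execution of the key moment estimate is genuinely different.

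The paper estimates $\Exp|Y(s)-Y(t)|^{-\gamma}$ by working directly with \emph{densities}: it writes the expectation as a double integral against $g_s(x)\,g_{t-s}(y)$, invokes the uniform density bound $M=\sup_{r\in[1/2,1]}\sup_{x}|g_r(x)|<\infty$ (quoted from \cite{KW}), and transfers the semi-selfsimilarity to the density via $g_w(y)=2^m g_r(2^m y+mr)$ when $w=2^{-m}r$. After a chain of explicit substitutions and a split of the integration domain near and away from the diagonal, the paper arrives at a bound of order $2^{m\gamma}\asymp|t-s|^{-\gamma}$ and integrates it. Your route is the Fourier-side dual of this: you condition on $\mathcal F_u$ to isolate the increment $X(t)-X(u)\eqd X(h)$, reduce to $\sup_a\Exp|t^{-1}X(h)+a|^{-s}$, and control that via the concentration function together with the bound $\operatorname{Re}\psi(y)\le -c_0|y|$, which you obtain from the scaling identity $\operatorname{Re}\psi(2y)=2\operatorname{Re}\psi(y)$ and compactness of a period. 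Your Fourier estimate is in fact \emph{equivalent} to the paper's density bound (integrability of $|\widehat\mu_h|$ gives a bounded density with $\|g_h\|_\infty\le C/h$), but your packaging via the concentration function and conditioning avoids the paper's explicit changes of variables and the splitting over $A$ and $A^\complement$, yielding the clean pointwise inequality $\Exp|Y(t)-Y(u)|^{-s}\le C(s)|t-u|^{-s}$ directly. Conversely, the paper's density calculation is more self-contained in that it does not appeal to the Esseen-type smoothing inequality. Your stated worries about rigour are unnecessary: the periodicity-and-compactness argument for $-c_0$ is already complete as you wrote it, and the conditioning step is standard for L\'evy increments.
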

Note that Theorem \ref{dimrangeR} together with \eqref{dimrel} yields $\dim_{\rm H} Y([\frac{1}{2},1])=\dim_{\rm B} Y([\frac{1}{2},1])=1$ almost surely. Since $Y$ is a process on $\rr$ it is obvious that $\dim_{\rm H} Y([\frac{1}{2},1])\leq1$ almost surely. For the proof of Theorem \ref{dimrangeR} it is hence sufficient to prove the following lemma.
\begin{lemma}\label{dimrangeRlow}
 We have $\dim_{\rm H} Y([\frac{1}{2},1])\geq1$ almost surely.
\end{lemma}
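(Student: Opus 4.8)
The plan is to establish the lower bound $\dim_{\rm H} Y([\tfrac12,1])\geq 1$ almost surely via the energy method (Frostman's criterion): it suffices to exhibit, for every $0<s<1$, a finite (random) measure $\mu$ supported on $Y([\tfrac12,1])$ whose $s$-energy $\iint |x-y|^{-s}\,d\mu(x)\,d\mu(y)$ is finite almost surely, or — what is technically more convenient here — to push forward Lebesgue measure on $[\tfrac12,1]$ through $Y$ and bound the expected energy. Concretely, let $\mu_Y$ denote the occupation measure $\mu_Y(A)=\int_{1/2}^1 \mathbf{1}_A(Y(t))\,dt$, which is supported on the range. Then
\begin{equation*}
\Exp\left[\int_{1/2}^1\!\int_{1/2}^1 \frac{dt\,ds}{|Y(t)-Y(s)|^{s}}\right]
=\int_{1/2}^1\!\int_{1/2}^1 \Exp\left[|Y(t)-Y(s)|^{-s}\right]dt\,ds,
\end{equation*}
and by Frostman's lemma the range has Hausdorff dimension at least $s$ on the event that this integral is finite; since this holds for all $s<1$, the lemma follows.

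The core estimate is therefore a bound of the form $\Exp[|Y(t)-Y(s)|^{-\sigma}]\leq C|t-s|^{-\sigma}$ (or something summably weaker) for $\sigma<1$, uniformly for $s,t\in[\tfrac12,1]$. To get at this I would first reduce the increment $Y(t)-Y(s)$ to an increment of the underlying semistable L\'evy process $X$: from the definition $Y(t)=t^{-1}(X(t)-t\log_2 t)$ one has, for $s<t$,
\begin{equation*}
Y(t)-Y(s)=t^{-1}\bigl(X(t)-X(s)\bigr)+\bigl(t^{-1}-s^{-1}\bigr)X(s)-\bigl(\log_2 t-\log_2 s\bigr).
\end{equation*}
The dominant term for small $|t-s|$ is $t^{-1}(X(t)-X(s))$, which by stationarity and independence of increments is distributed as $t^{-1}X(t-s)$ and is independent of $X(s)$. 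The required input is then a small-ball (anti-concentration) estimate for the semistable random variable $X(h)$: one needs $\Pr(|X(h)|\leq r)\lesssim (r/h)^{?}$ or a bound on $\Exp[|X(h)|^{-\sigma}]$ of order $h^{-\sigma}$ for $\sigma$ up to $1$. Such bounds are standard for strictly semistable L\'evy processes via the scaling $X(2^k h)\eqd 2^k(X(h)+kh)$ together with the fact that $X(1)$ has a bounded density (or at least controlled concentration near any point), bootstrapped through the semi-selfsimilarity; this is exactly the kind of argument referenced in \cite{Xiao,MX,KW}. A conditioning argument handles the lower-order terms $(t^{-1}-s^{-1})X(s)$ and the deterministic shift, since these are bounded in probability and do not destroy the anti-concentration.

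The main obstacle I anticipate is that $Y$ is neither self-similar nor a L\'evy process — as the authors themselves emphasize — so one cannot directly quote the classical range-dimension theorems. The workaround is that $Y$ is a \emph{deterministic invertible transformation} of $X$ restricted to the compact interval $[\tfrac12,1]$ bounded away from $0$: the maps $t\mapsto t^{-1}$ and $t\mapsto \log_2 t$ are $C^1$ with bounded derivatives there, so $|Y(t)-Y(s)|$ differs from $t^{-1}|X(t)-X(s)|$ only by terms controlled by $|X(s)|\cdot|t-s|$ and $|t-s|$, which on the event that $\sup_{[1/2,1]}|X|$ is finite (almost sure) are genuinely lower order. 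Making this rigorous requires a localization: split the probability according to the size of $\sup_{u\in[1/2,1]}|X(u)|\leq M$, get the energy bound on each such event with a constant depending on $M$, and let $M\to\infty$; since the Hausdorff dimension is a tail property (unaffected by events of full measure), a dimension lower bound valid on each event $\{\sup|X|\leq M\}$ of positive probability, with $M\to\infty$, yields the almost-sure statement. The delicate point to check is that the anti-concentration estimate for $X(t-s)$ is \emph{uniform} in $s,t\in[\tfrac12,1]$ and strong enough to survive the extra $\log_2$ factor implicit in the semistable scaling — i.e. that one really can take $\sigma$ arbitrarily close to $1$ — which is where the precise form of the semi-selfsimilarity relation $X(2^k t)\eqd 2^k(X(t)+kt)$ must be used carefully rather than a crude scaling bound.
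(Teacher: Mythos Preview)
Your strategy --- Frostman's energy criterion applied to the occupation measure, reducing $\Exp[|Y(t)-Y(s)|^{-\gamma}]$ via the decomposition $Y(t)-Y(s)=t^{-1}(X(t)-X(s))+(t^{-1}-s^{-1})X(s)-(\log_2 t-\log_2 s)$ and anti-concentration for the semistable increment --- is exactly the paper's. The paper writes the expectation as a double integral against the joint density $g_s(x)\,g_{t-s}(y)$ of $(X(s),X(t)-X(s))$, uses the semi-selfsimilarity $X(2^{-m}r)\eqd 2^{-m}(X(r)-mr)$ to rewrite $g_{t-s}$ in terms of $g_r$ with $r\in(\tfrac12,1]$, and then invokes the single technical input $M:=\sup_{r\in[1/2,1]}\sup_{x\in\rr}g_r(x)<\infty$. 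A change of variables gives $\Exp[|Y(t)-Y(s)|^{-\gamma}]\leq K\,2^{m\gamma}\asymp K|t-s|^{-\gamma}$, which is the integrable bound you are after.

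The one detour in your sketch that you should drop is the localization on $\{\sup_{[1/2,1]}|X|\leq M\}$. It is unnecessary: once you condition on $X(s)$, the shift $(t^{-1}-s^{-1})X(s)-(\log_2 t-\log_2 s)$ is a \emph{deterministic} constant $c$, and the bounded-density anti-concentration estimate is translation-invariant --- the bound $\Exp[|t^{-1}X(t-s)+c|^{-\gamma}]\leq C|t-s|^{-\gamma}$ holds uniformly in $c\in\rr$, hence uniformly in the conditioning value of $X(s)$. Averaging over $X(s)$ then yields the unconditional bound directly. (If instead you tried to argue via the pointwise comparison $|Y(t)-Y(s)|\approx t^{-1}|X(t)-X(s)|$ on the truncated event, you would hit a genuine obstacle: the shift can make $Y(t)-Y(s)$ small even when $X(t)-X(s)$ is not, so $|Y(t)-Y(s)|^{-\gamma}$ is \emph{not} dominated by $|X(t)-X(s)|^{-\gamma}$. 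The conditioning route sidesteps this, and the paper's explicit density computation is nothing other than that conditioning written out as iterated integrals.)
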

\begin{proof}
 As mentioned above we can write
\begin{align*}
 Y(t)=t^{-1}\left(X(t)-t\log_2t\right),
\end{align*}
where $X=\{X(t)\}_{t\geq0}$ is a semistable L\'evy process. To prove the proposition we will apply Frostman's theorem \cite{Kah,Mat} with the probability measure $\sigma=2\lambda|_{[\frac{1}{2},1]}$, where $\lambda$ denotes Lebesgue measure.  For this purpose let $0<\gamma<1$ and note that $\sigma$ is an admissible measure for Frostman's lemma, i.e.
$$\int_{\frac12}^1\int_{\frac12}^1\frac{\sigma(ds)\,\sigma(dt)}{|s-t|^\gamma}<\infty.$$
 By Frostman's theorem it is now sufficient to show that
\begin{equation}\label{Frostman}
 \int_{\frac{1}{2}}^{1}\int_{\frac{1}{2}}^{1}\Exp\left[|Y(s)-Y(t)|^{-\gamma}\right]ds\,dt <\infty.
\end{equation}
For $r\in[\frac12,1]$ let $g_r$ be a Lebesgue density of $X(r)$ chosen from the class $C^\infty(\rr)$ by Proposition 2.8.1 in \cite{Sat}. Then we have $M:=\sup_{r\in[\frac{1}{2},1]}\sup_{x\in\rr}\left|g_r(x)\right|<\infty$ as in Lemma 2.2 of \cite{KW}. By symmetry of the integrand we get
\begin{align*}
&\int_{\frac{1}{2}}^{1}\int_{\frac{1}{2}}^{1}\Exp\left[|Y(s)-Y(t)|^{-\gamma}\right]ds\,dt\\
&= 2 \int_{\frac{1}{2}}^{1}\int_{\frac{1}{2}}^{t}\Exp\left[\left|s^{-1}X(s)-\log_2s-t^{-1}\left(X(s)+(X(t)-X(s)))+\log_2t\right)\right|^{-\gamma}\right]ds\,dt\\
&= 2 \int_{\frac{1}{2}}^{1}\int_{\frac{1}{2}}^{t}\int_{\rr^2}\left|s^{-1}x-\log_2s-t^{-1}(x+y)+\log_2t\right|^{-\gamma}g_s(x)\,g_{t-s}(y)\,d\lambda^2(x,y)\,ds\,dt\\
&= 2 \int_{\frac{1}{2}}^{1}\int_{\frac{1}{2}}^{t}\int_{\rr^2}\left|\frac{t-s}{st}\,x+\log_2\left(\frac{t}{s}\right)-\frac{y}{t}\right|^{-\gamma}g_s(x)\,g_{t-s}(y)\,d\lambda^2(x,y)\,ds\,dt\\
&= 2 \int_{\frac{1}{2}}^{1}\int_{0}^{t-\frac{1}{2}}\int_{\rr^2}\left|\frac{w}{t(t-w)}\,x+\log_2\left(\frac{t}{t-w}\right)-\frac{y}{t}\right|^{-\gamma}g_{t-w}(x)\,g_w(y)\,d\lambda^2(x,y)\,dw\,dt,
\end{align*}
where in the last equality we substituted $w=t-s$. Now we write $w\in[0,\frac{1}{2}]$ as $w=2^{-m}r$ with $m=m(w)\in\nat$ and $r\in(\frac{1}{2},1]$. This leads us to
\begin{align*}
 g_w(y)&=\frac{d}{dy}\mathbb{P}\left(X(w)\leq y\right) = \frac{d}{dy}\mathbb{P}\left(X(2^{-m}r)\leq y\right) = \frac{d}{dy}\mathbb{P}\left(2^{-m}(X(r)-mr)\leq y\right)\\
&= \frac{d}{dy}\mathbb{P}\left(X(r)\leq 2^my+mr\right) = 2^mg_r\left(2^my+mr\right).
\end{align*}
Using the substitutions $v=2^my+mr$ and $u=\frac{t}{2^{-m}}\left(\frac{w}{t(t-w)}x+\log_2\left(\frac{t}{t-w}\right)+\frac{mw}{t}\right)$ we get
\begin{align*}
 &\int_{\rr^2}\left|\frac{w}{t(t-w)}\,x+\log_2\left(\frac{t}{t-w}\right)-\frac{y}{t}\right|^{-\gamma}g_{t-w}(x)\,g_w(y)\,d\lambda^2(x,y)\\
 &= 2^m\int_{\rr^2}\left|\frac{w}{t(t-w)}\,x+\log_2\left(\frac{t}{t-w}\right)-\frac{y}{t}\right|^{-\gamma}g_{t-w}(x)\,g_r(2^my+mr)\,d\lambda^2(x,y)\\
 &= \int_{\rr^2}\left|\frac{w}{t(t-w)}\,x+\log_2\left(\frac{t}{t-w}\right)-\frac{2^{-m}}{t}\,v+\frac{mw}{t}\right|^{-\gamma}\,g_{t-w}(x)g_r(v)\,d\lambda^2(x,v)\\
 &= \frac{t-w}{r}\int_{\rr^2}\left|\frac{2^{-m}}{t}(u-v)\right|^{-\gamma}g_{t-w}(x(u))\,g_r(v)\,d\lambda^2(u,v)\\
 &= \frac{t^\gamma(t-w)2^{m\gamma}}{r}\left(\int_A+\int_{A^\complement}\right)\left|u-v\right|^{-\gamma}g_{t-w}(x(u))\,g_r(v)\,d\lambda^2(u,v),
\end{align*}
where $A$ denotes the set $A=\left\{(u,v)\in\rr^2:|u-v|\leq1\right\}$. We now estimate the two integrals separately.
First,
\begin{align*}
 &\int_A\left|u-v\right|^{-\gamma}g_{t-w}(x(u))\,g_r(v)\,d\lambda^2(u,v)\\
 &\quad\leq M\int_\rr\left(\int_{v-1}^v(v-u)^{-\gamma}du + \int_v^{v+1}(u-v)^{-\gamma}du\right)g_r(v)\,dv\\
 &\quad= M\int_\rr\frac{2}{1-\gamma}g_r(v)\,dv = \frac{2M}{1-\gamma}
\end{align*}
and secondly,
\begin{align*}
 &\int_{A^\complement}\left|u-v\right|^{-\gamma}g_{t-w}(x(u))\,g_r(v)\,d\lambda^2(u,v)\leq \int_{A^\complement}g_{t-w}(x(u))\,g_r(v)\,d\lambda^2(u,v)\\
 &\quad\leq \frac{r}{t-w}\int_{\rr^2}g_{t-w}(x)\,g_r(v)\,d\lambda^2(x,v)=\frac{r}{t-w}.
\end{align*}
This leads us to
\begin{align*}
 &\int_{\rr^2}\left|\frac{w}{t(t-w)}\,x+\log_2\left(\frac{t}{t-w}\right)-\frac{y}{t}\right|^{-\gamma}g_{t-w}(x)\,g_w(y)\,d\lambda^2(x,y)\\
 &\quad\leq t^\gamma2^{m\gamma}\left(\frac{2M(t-w)}{r(1-\gamma)}+1\right)\leq t^\gamma2^{m\gamma}\left(\frac{4M}{1-\gamma}+1\right)=:Kt^\gamma2^{m\gamma}.
\end{align*}
Taken all together, we obtain 
\begin{align*}
 &\int_{\frac{1}{2}}^{1}\int_{\frac{1}{2}}^{1}\Exp\left[|Y(s)-Y(t)|^{-\gamma}\right]ds\,dt \leq 2K\int_\frac{1}{2}^1\int_0^{t-\frac{1}{2}}t^\gamma2^{m(w)\gamma}\,dw\,dt\\
 &\quad=2K\int_0^\frac{1}{2}\int_{\frac{1}{2}+w}^1 t^\gamma2^{m(w)\gamma}\,dt\,dw =2K\sum_{m\in\nat}\int_{2^{-(m+1)}}^{2^{-m}}\int_{\frac{1}{2}+w}^1 t^\gamma2^{m\gamma}\,dt\,dw\\
 &\quad\leq 2K\sum_{m\in\nat}\int_{\frac{1}{2}}^{1}\int_\frac{1}{2}^1 t^\gamma2^{m\gamma}\,dt \;2^{-m}\,dr=K\sum_{m\in\nat}\left(2^{\gamma-1}\right)^m\int_{\frac{1}{2}}^{1}t^\gamma\,dt <\infty,
\end{align*}
since $\gamma-1<0$. This concludes our proof.
\end{proof}

\subsection{Hausdorff dimension of the graph}

In this section we show that the dimension result for the range of the stochastic process $Y$ also holds for its graph $G_Y([\frac{1}{2},1])$. We will split the proof into two parts, first verifying $\alpha=1$ as an upper bound and secondly as a lower bound for the Hausdorff dimension of the graph.\\

We first calculate the upper bound for the Hausdorff dimension of the graph of the semistable L\'evy process $X$ and later on transfer the result to the process $Y$. As $X$ is not strictly semistable we can't use the dimension results of \cite{KW}, without modifying it according to our situation. Note that for the strictly stable (symmetric) Cauchy process on $\rr$ it is known that the Hausdorff dimension of the range coincides with those of an asymmetric (non-strictly) stable Cauchy process; see \cite{BG,Haw,Tay}.

\begin{theorem}\label{dimXup}
 Let $\{Z(t):=(t,X(t))\}_{t\geq0}$. Then almost surely
\begin{align*}
 \dim_{\rm H} Z([\tfrac{1}{2},1])\leq 1.
\end{align*}
\end{theorem}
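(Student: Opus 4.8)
The plan is to cover the graph $Z([\frac12,1])=\{(t,X(t)):t\in[\frac12,1]\}$ by boxes of a dyadic side length and count them, aiming to bound $\dimu_{\rm B}$ — which by \eqref{dimrel} dominates $\dim_{\rm H}$ — by $1$. Since $X$ is a semistable L\'evy process whose L\'evy measure $\phi$ is concentrated on $2^\ganz$, its sample paths are pure-jump-plus-drift, and all jumps are upward. The key quantity is the modulus of continuity of $X$ on a dyadic mesh. Fix $n$ and partition $[\frac12,1]$ into $2^n$ intervals $I_{n,k}$ of length $2^{-n-1}$. For each $k$ the oscillation $O_{n,k}:=\sup_{s,t\in I_{n,k}}|X(s)-X(t)|$ controls how many vertical boxes of side $2^{-n}$ are needed above $I_{n,k}$, namely of order $1+2^nO_{n,k}$. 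Hence $N_{2^{-n}}(Z([\frac12,1]))\le C\sum_{k=1}^{2^n}(1+2^nO_{n,k})=C\,2^n+C\,2^n\sum_k O_{n,k}$, and it suffices to show that $\sum_{k=1}^{2^n}O_{n,k}$ stays (a.s., eventually) bounded by something like $C$ — or at worst grows subexponentially and sub-$2^{(1-\varepsilon)n}$ — so that $\log N_{2^{-n}}/( n\log 2)\to 1$.

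The main step is therefore to estimate $\Exp\big[\sum_{k=1}^{2^n}O_{n,k}\big]$, or better a tail bound, using the semi-selfsimilarity $X(2^{-n}t)\eqd 2^{-n}(X(t)+(-n)t\cdot(-1))$, i.e. $X(2^{-n}r)\eqd 2^{-n}(X(r)-nr)$ for $r\in[\frac12,1]$, exactly as exploited in the proof of Lemma~\ref{dimrangeRlow}. Because the increments of $X$ over disjoint intervals are independent and the interval $I_{n,k}$ has length $2^{-n-1}$, each oscillation $O_{n,k}$ is, up to the deterministic shift coming from the non-strict part, distributed like $2^{-n}$ times the oscillation of $X$ over an interval of length $\frac12$ (plus the drift term $n\cdot 2^{-n}\cdot\frac12$ absorbed into the count, which only contributes an extra polynomial-in-$n$ factor). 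So $\Exp[O_{n,k}]\le 2^{-n}\big(\Exp[\tilde O]+ n\big)$ with $\tilde O:=\sup_{r\in[0,1/2]}|X(r)|+\sup_{r\in[0,1/2]}|X(1/2)-X(1/2-r)|$ having finite expectation (finite first moment of the supremum of a semistable L\'evy process with $\Exp|X(t)|<\infty$, since the truncated-mean condition holds here). Summing over the $2^n$ intervals gives $\Exp[\sum_k O_{n,k}]\le C(1+n)$, hence $\Exp[N_{2^{-n}}(Z)]\le C 2^n(1+n)$. A Markov inequality plus Borel--Cantelli (applied to $N_{2^{-n}}\le 2^{n}n^{3}$, say) yields $N_{2^{-n}}(Z([\frac12,1]))\le 2^n n^3$ eventually a.s., and then $\dimu_{\rm B}Z([\frac12,1])\le\limsup_n \frac{n\log 2+3\log n}{n\log 2}=1$ a.s., which by \eqref{dimrel} gives $\dim_{\rm H}Z([\frac12,1])\le 1$ a.s.

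The delicate point I expect to be the main obstacle is handling the \emph{non-strict} drift term cleanly: after rescaling an interval of length $2^{-n-1}$ by the semi-selfsimilarity map one picks up the additive correction $-nr\,2^{-n}$, and one must check this only inflates the box count by a polynomial factor in $n$ rather than affecting the exponential rate — this is precisely the modification over \cite{KW} that the remark about the asymmetric Cauchy process ($\dim_{\rm H}$ of range unchanged, see \cite{BG,Haw,Tay}) is flagging. A secondary technical issue is justifying the finite first moment of the interval-supremum $\tilde O$ of $X$; this follows from the existence of $\Exp|X(t)|<\infty$ together with standard maximal inequalities for L\'evy processes (e.g. via the L\'evy--Itô decomposition, bounding the supremum by the sum of a martingale part with an $L^1$ or $L^2$ maximal inequality and a finite-variation drift-plus-compensator part), so it should be routine. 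Once these are in place, the covering estimate above closes the argument.
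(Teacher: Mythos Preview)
Your covering argument has a genuine gap at the moment estimate. You assert that $\Exp[\tilde O]<\infty$ and that this ``follows from the existence of $\Exp|X(t)|<\infty$'', but the latter is false here: the L\'evy measure $\phi$ satisfies
\[
\int_{(1,\infty)}x\,\phi(dx)=\sum_{k\geq1}2^k\cdot 2^{-k}=\infty,
\]
so $\Exp[X(t)^+]=\infty$ for every $t>0$. The process $X$ is a semistable analogue of the (spectrally positive) Cauchy process---this is exactly what the paper is signalling with the reference to \cite{BG,Haw,Tay}---and consequently the oscillation $O_{n,k}$ over any interval has infinite expectation. Your bound $\Exp[\sum_k O_{n,k}]\leq C(1+n)$ therefore cannot hold, and the Markov/Borel--Cantelli step collapses. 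Replacing the oscillation by the Lebesgue measure of the range $X(I_{n,k})$ does not obviously help either, since for index-one processes this quantity is also delicate.

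The paper circumvents the lack of moments entirely by working on the \emph{sojourn-time} side: instead of bounding oscillations from above, it bounds from below the expected time $Z$ spends in a small ball, obtaining $\Exp[T(a,s)]\geq Ka$ via the semi-selfsimilarity relation (this uses only probability estimates $\mathbb P(|X(r)-ir|<c\,2^i)$, never moments). The Pruitt--Taylor covering lemma \cite[Lemma~6.1]{PT} then converts this lower bound on sojourn time into an upper bound on the expected number of cubes in a $K_1$-nested family hit by the process, yielding $\Exp[M_u(a,s)]\leq C s/a$, from which the Hausdorff-measure bound follows as in \cite{KW}. If you want to rescue a direct box-counting route, you would need to truncate the large jumps (say, those exceeding $2^{-n/2}$) and control separately the finitely many intervals where such a jump occurs; this can be made to work but is considerably more involved than the sojourn-time approach.
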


Let $T(a,s)=\int_0^s1_{B(0,a)}(Z(t))\,dt$ denote the sojourn time of the L\'evy process $Z$ up to time $s$ in the closed ball $B(0,a)\subseteq\rr^2$ with radius $a$ centered at the origin. To prove Theorem \ref{dimXup} we need the following lemma. 

\begin{lemma}\label{sojour}
 Let $Z$ be the stochastic process from above. There exists a positive and finite constant $K$ such that for all $0<a\leq1$ and $\frac{a}{\sqrt{2}}\leq s\leq1$ we have
\begin{align*}
 \Exp\left[T(a,s)\right]\geq Ka.
\end{align*}
\end{lemma}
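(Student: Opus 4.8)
The plan is to compute a lower bound for $\Exp[T(a,s)]=\int_0^s\mathbb{P}(Z(t)\in B(0,a))\,dt$ directly from the definition of the sojourn time and Fubini's theorem. Since $Z(t)=(t,X(t))$, the event $\{Z(t)\in B(0,a)\}$ requires $t^2+X(t)^2\le a^2$, hence in particular $t\le a$; so only the range $t\in[0,a]$ contributes, and on that range the event contains the smaller event $\{X(t)^2\le a^2-t^2\}$, i.e.\ $\{|X(t)|\le\sqrt{a^2-t^2}\}$. Thus
\begin{align*}
 \Exp[T(a,s)]\;=\;\int_0^{\min\{s,a\}}\mathbb{P}\bigl(|X(t)|\le\sqrt{a^2-t^2}\bigr)\,dt.
\end{align*}
First I would restrict the integration to the subinterval $t\in[0,\tfrac{a}{\sqrt 2}]$, which is legitimate because the hypothesis $s\ge\tfrac{a}{\sqrt 2}$ guarantees $\min\{s,a\}\ge\tfrac{a}{\sqrt2}$. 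On this subinterval $\sqrt{a^2-t^2}\ge\tfrac{a}{\sqrt2}$, so it suffices to bound $\mathbb{P}(|X(t)|\le\tfrac{a}{\sqrt2})$ from below uniformly for $t\in(0,\tfrac{a}{\sqrt2}]$.

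The key step is therefore a lower bound on the density of $X(t)$ near the origin that is uniform in $t$ over a bounded range. Here I would use semi-selfsimilarity to reduce to a compact parameter set: writing $t=2^{-m}r$ with $m\in\nat_0$ and $r\in(\tfrac12,1]$ (for $t\le\tfrac12$; the finitely many $t$ near $a$ with $a$ close to $1$ are handled separately or absorbed into constants), the relation $X(2^{-m}r)\eqd 2^{-m}(X(r)-mr)$ gives, for the $C^\infty$ densities $g_t$ from Proposition~2.8.1 of \cite{Sat}, the identity $g_t(x)=2^m g_r(2^m x+mr)$. Hence
\begin{align*}
 \mathbb{P}\bigl(|X(t)|\le\tfrac{a}{\sqrt2}\bigr)=\int_{-a/\sqrt2}^{a/\sqrt2}g_t(x)\,dx=\int_{mr-2^m a/\sqrt2}^{mr+2^m a/\sqrt2}g_r(v)\,dv\;\ge\;\int_{mr-1/\sqrt2}^{mr+1/\sqrt2}g_r(v)\,dv,
\end{align*}
where the last inequality uses $2^m a\ge 2^m t\cdot\sqrt2\cdots$ — more carefully, since $t=2^{-m}r\le\tfrac{a}{\sqrt2}$ we have $2^m a\ge\sqrt2\,r>1$, so the interval of integration has half-length $2^m a/\sqrt2>r/\!\sqrt2\cdot\!\sqrt2\cdots\ge$ some fixed positive number; I will pin down that the half-length exceeds a universal constant $c_0>0$. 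Thus the task becomes: show $\inf_{r\in(1/2,1]}\inf_{m\in\nat_0}\int_{mr-c_0}^{mr+c_0}g_r(v)\,dv>0$. This is where the main obstacle lies.

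The difficulty is that the centre $mr$ of the interval drifts to $+\infty$ as $m\to\infty$, so one cannot simply invoke continuity and positivity of $g_r$ on a compact set; one needs control of the \emph{tail} behaviour of $g_r$ along the sequence of shifted windows. I would resolve this by an inductive/probabilistic argument rather than an analytic one: rather than lower-bounding the shifted density, observe directly that $\mathbb{P}(|X(t)|\le\tfrac{a}{\sqrt2})=\mathbb{P}(|X(2^{-m}r)|\le\tfrac{a}{\sqrt2})$ and that $X(2^{-m}r)=2^{-m}X(2^m\cdot 2^{-m}r)$ only after re-centring — instead use that $X(t)/(-t\log_2 t)\to1$ in probability as $t\downarrow0$ (Feller's weak law, which is exactly the $t\downarrow0$ incarnation of the St.\ Petersburg limit theorem quoted in the introduction), so that for $t$ small, $X(t)$ concentrates near $-t\log_2 t$, a quantity that is itself $o(a)$ when $t\le\tfrac{a}{\sqrt2}$ and $a$ small; this forces $\mathbb{P}(|X(t)|\le\tfrac{a}{\sqrt2})\ge\tfrac12$ say, for all $t\le\tfrac{a}{\sqrt2}$ once $a\le a_0$. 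For the remaining compact range $a_0\le a\le1$, $t\in[0,\tfrac{a}{\sqrt2}]$ stays in a fixed compact interval bounded away from regions where the (continuous, everywhere positive) density degenerates, so $\mathbb{P}(|X(t)|\le\tfrac{a}{\sqrt2})\ge\mathbb{P}(|X(t)|\le a_0/\sqrt2)$ is bounded below by a positive constant there as well. Combining, there is a constant $c_1>0$ with $\mathbb{P}(|X(t)|\le\tfrac{a}{\sqrt2})\ge c_1$ for all $0<a\le1$ and $0<t\le\tfrac{a}{\sqrt2}$, whence
\begin{align*}
 \Exp[T(a,s)]\;\ge\;\int_0^{a/\sqrt2}\mathbb{P}\bigl(|X(t)|\le\tfrac{a}{\sqrt2}\bigr)\,dt\;\ge\;c_1\cdot\frac{a}{\sqrt2}\;=\;Ka
\end{align*}
with $K=c_1/\sqrt2>0$, which is the claim. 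The one genuinely delicate point to get right is making the concentration bound uniform down to $t=0$; everything else is bookkeeping with Fubini and the scaling relation.
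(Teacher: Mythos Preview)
Your proof correctly sets up the reduction to bounding $\mathbb{P}(|X(t)|\le\tfrac{a}{\sqrt2})$ from below for $t\in(0,\tfrac{a}{\sqrt2}]$, and you rightly identify the drift $mr\to\infty$ under the scaling $X(2^{-m}r)\eqd2^{-m}(X(r)-mr)$ as the central difficulty. However, the resolution via Feller's weak law breaks down: the claim that the concentration point $t\log_2 t$ (your sign is off, but that is immaterial) is $o(a)$ for $t\le\tfrac{a}{\sqrt2}$ is false at the upper end of the integration range. Indeed for $t=\tfrac{a}{\sqrt2}$ one has $|t\log_2 t|=\tfrac{a}{\sqrt2}\,\bigl|\log_2\tfrac{a}{\sqrt2}\bigr|\sim\tfrac{a}{\sqrt2}|\log_2 a|\gg a$ as $a\downarrow0$, so $X(t)$ actually concentrates far \emph{outside} $[-\tfrac{a}{\sqrt2},\tfrac{a}{\sqrt2}]$ and $\mathbb{P}(|X(t)|\le\tfrac{a}{\sqrt2})\to0$ there. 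Hence no uniform lower bound $c_1>0$ over all $t\in(0,\tfrac{a}{\sqrt2}]$ exists, and the compact-range patch for $a\in[a_0,1]$ cannot rescue this since the failure occurs precisely for small $a$.

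The paper's argument avoids this trap by a different mechanism. Rather than seeking a uniform lower bound on the probability, it discards the top of the range and integrates only over $(0,2^{-i_0}]$ with $2^{-i_0}\asymp a\delta$ for a small auxiliary parameter $\delta$, then decomposes dyadically: for $t=2^{-i}r$ with $r\in[1,2)$ and $i>i_0$ the event becomes $\{|X(r)-ir|<2^i a/\sqrt2\}$. The key observation is that the window half-width $2^i a/\sqrt2$ grows \emph{exponentially} in $i$ while the centre $ir$ grows only linearly, so beyond a threshold the window swallows any prescribed bounded interval; combined with the almost-sure finiteness of $\sup_{r\in[1,2)}X(r)$ and $\inf_{r\in[1,2)}X(r)$ this yields probability at least $\tfrac12$ on each surviving dyadic layer, and summing the geometric series gives the bound $Ka$. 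It is this exponential-versus-linear comparison---absent from your concentration argument---that does the real work.
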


\begin{proof}
 Fix $0<a\leq1$ and let $0<\delta\leq\frac1{\sqrt{2}}$, to be specified later, so that
\begin{align*}
 2^{-(i_0 +1)}<a\delta\leq 2^{-i_0}<\frac{a}{\sqrt{2}}<s, 
\end{align*}
for some $i_0\in\nat_0$. We have
\begin{align*}
 &\Exp\left[T(a,s)\right]=\int_{0}^{s}\mathbb{P}\left(\|Z(t)\|<a\right)dt \geq \int_{0}^{s}\mathbb{P}\left(|X(t)|<\frac{a}{\sqrt{2}}, t<\frac{a}{\sqrt{2}}\right)dt\\
&= \int_{0}^{\frac{a}{\sqrt{2}}}\mathbb{P}\left(|X(t)|<\frac{a}{\sqrt{2}}\right)dt \geq \int_{0}^{2^{-i_0}}\mathbb{P}\left(|X(t)|<\frac{a}{\sqrt{2}}\right)dt\\
& = \sum_{i=i_0 +1}^{\infty}\int_{2^{-i}}^{2^{-i+1}}\mathbb{P}\left(|X(t)|<\frac{a}{\sqrt{2}}\right)dt = \sum_{i=i_0 +1}^{\infty}2^{-i}\int_{1}^{2}\mathbb{P}\left(|X(2^{-i}r)|<\frac{a}{\sqrt{2}}\right)dr\\
&= \sum_{i=i_0 +1}^{\infty}2^{-i}\int_{1}^{2}\mathbb{P}\left(|2^{-i}(X(r)-ir)|<\frac{a}{\sqrt{2}}\right)dr\\
&= \sum_{i=i_0 +1}^{\infty}2^{-i}\int_{1}^{2}\mathbb{P}\left(|X(r)-ir|<\frac{2^ia}{\sqrt{2}}\right)dr.
\end{align*}
The probability from above can be estimated from below by
\begin{align*}
 &\mathbb{P}\left(|X(r)-ir|<\frac{2^ia}{\sqrt{2}}\right) = \mathbb{P}\left(-\frac{a}{\sqrt{2}}2^i+ir<X(r)<\frac{a}{\sqrt{2}}2^i+ir\right)\\
&\quad\geq \mathbb{P}\left(-\frac{a}{\sqrt{2}}2^i+2i<X(r)<\frac{a}{\sqrt{2}}2^i\right)\\
&\quad= \mathbb{P}\left(X(r)<\frac{a}{\sqrt{2}}2^i\right)-\mathbb{P}\left(X(r)\leq -\frac{a}{\sqrt{2}}2^i+2i\right)\\
&\quad\geq \mathbb{P}\left(\sup_{r\in[1,2)}X(r)<\frac{a}{\sqrt{2}}2^i\right)-\mathbb{P}\left(\inf_{r\in[1,2)}X(r)\leq -\frac{a}{\sqrt{2}}2^i+2i\right)\\
& \quad\geq \mathbb{P}\left(\sup_{r\in[1,2)}X(r)<\frac{a}{2\sqrt{2}}2^{i_0+1}\right)-\mathbb{P}\left(\inf_{r\in[1,2)}X(r)\leq -\frac{a}{2\sqrt{2}}2^{i_0+1}\right)
\end{align*}
if we choose $i_0\in\nat_0$ large enough so that $2i\leq\frac{a}{2\sqrt{2}}2^i$ for all $i>i_0$.
As $X$ is a L\'evy process, we can assume that it has c\`adl\`ag paths and thus both $\sup_{r\in[1,2)}X(r)$ and $\inf_{r\in[1,2)}X(r)$ are random variables. Hence we can choose $0<\delta\leq\frac1{\sqrt{2}}$ from above small enough (i.e., $i_0\in\nat_0$ even bigger) so that we have
\begin{align*}
 &\mathbb{P}\left(\sup_{r\in[1,2)}X(r)<\frac{a}{2\sqrt{2}}2^{i_0+1}\right)-\mathbb{P}\left(\inf_{r\in[1,2)}X(r)\leq -\frac{a}{2\sqrt{2}}2^{i_0+1}\right)\\
& \quad \geq\mathbb{P}\left(\sup_{r\in[1,2)}X(r)<\frac{1}{\delta 2\sqrt{2}}\right)-\mathbb{P}\left(\inf_{r\in[1,2)}X(r)\leq -\frac{1}{\delta 2\sqrt{2}}\right)\geq\frac12.
\end{align*}
Note that $\delta$ does not depend on $a$. It follows that
\begin{align*}
 \Exp\left[T(a,s)\right]\geq \sum_{i=i_0 +1}^{\infty}2^{-i}\int_{1}^{2}\frac12\,dr =\frac12\sum_{i=i_0 +1}^{\infty}2^{-i} = \frac12 2^{-i_0}\geq \frac12\delta a=:Ka,
\end{align*} 
which concludes the proof.
\end{proof}

\begin{proof}[Proof of Theorem \ref{dimXup}]
Let $K_1>0$ be a fixed constant. A family $\Lambda(a)$ of cubes of side $a\in(0,1]$ in $\rr^2$ is called $K_1$-nested if no ball of radius $a$ in $\rr^2$ can intersect more than $K_1$ cubes of $\Lambda(a)$. For any $u\geq0$ let $M_u(a,s)$ be the number of cubes hit by the L\'evy process $Z$ at some time $t\in[u,u+s]$. Then a famous covering lemma of Pruitt and Taylor \cite[Lemma 6.1]{PT} states that 
$$\Exp[M_u(a,s)]\leq2K_1s\cdot(\Exp[T(\tfrac{a}3,s)])^{-1}.$$
Lemma \ref{sojour} now enables us to construct a covering of $Z([\frac12,1])$ whose expected $s$-dimensional Hausdorff measure is finite for every $s>1$. The arguments are in complete analogy to the proof of part (i) of Lemma 3.4 in \cite{KW} and thus omitted.
\end {proof}

In order to transfer the result of Theorem \ref{dimXup} to the process $Y$ we can now write all elements $(t,Y(t))^\top\in G_Y([\frac{1}{2},1])$ as
\begin{align*}
 \left(\begin{array}{c} t \\ Y(t) \end{array}\right)=\left(\begin{array}{c} t \\ t^{-1}X(t)-\log_2t \end{array}\right)=: T\left(t,X(t)\right).
\end{align*}
It can easily be shown that for a fixed constant $C>0$ the function
\begin{align*}
 T:[\tfrac{1}{2},1]\times[-C,C]\rightarrow \Ima(T), \quad\left(\begin{array}{c} t \\ x \end{array}\right)\mapsto T(t,x) = \left(\begin{array}{c} t \\ t^{-1}x-\log_2t \end{array}\right)
\end{align*}
is bi-Lipschitz. Since $X$ is a L\'evy process, it can be assumed that all paths are c\'adl\'ag and hence that for all fixed $\omega\in\Omega$ there exists a constant $C(\omega)<\infty$ such that
\begin{align*}
 X(t)(\omega)\in[-C(\omega),C(\omega)] \quad \text{ for all } t\in[\tfrac{1}{2},1].
\end{align*}
This means that for $Z=\left(Z(t)=(t,X(t))\right)_{t\in[\frac{1}{2},1]}$ and all $\omega\in\Omega$ we have
\begin{align*}
 \dim_{\rm H} Z([\tfrac{1}{2},1])(\omega)=\dim_{\rm H} T(Z([\tfrac{1}{2},1]))(\omega)=\dim_{\rm H} G_Y([\tfrac{1}{2},1])(\omega)
\end{align*}
by Lemma 1.8 in \cite{Fal1}. Since we have shown in Theorem \ref{dimXup} that $\dim_{\rm H} Z([\frac{1}{2},1])\leq1$ almost surely, we have thus proven the following upper bound.

\begin{theorem}\label{dimHGup}
We have $\dim_{\rm H} G_Y([\frac{1}{2},1])\leq1$ almost surely.
\end{theorem}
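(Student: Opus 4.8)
The plan is to deduce Theorem \ref{dimHGup} directly from Theorem \ref{dimXup} by realizing the graph $G_Y([\frac12,1])$ as the image of $Z([\frac12,1])$ under a bi-Lipschitz map, and then using that Hausdorff dimension is invariant under such maps. Concretely, since $Y(t)=t^{-1}X(t)-\log_2 t$, one has $(t,Y(t))=T(t,X(t))$ with $T(t,x)=(t,\,t^{-1}x-\log_2 t)$, whose inverse is $T^{-1}(t,y)=(t,\,t(y+\log_2 t))$. The Jacobian determinant of $T$ equals $t^{-1}$, which is bounded away from $0$ and $\infty$ for $t\in[\frac12,1]$; hence on any compact rectangle $[\frac12,1]\times[-C,C]$ the map $T$ is a diffeomorphism onto its image and therefore bi-Lipschitz there, with all constants depending only on $C$. (The only a priori singular factor $t^{-1}$ is harmless since it lies in $[1,2]$ on $[\frac12,1]$.)

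First I would fix a sample path $\omega$. Taking $X$ to be c\`adl\`ag, the range $\{X(t)(\omega):t\in[\frac12,1]\}$ is bounded, say contained in $[-C(\omega),C(\omega)]$ with $C(\omega)<\infty$. Consequently $Z([\frac12,1])(\omega)\subseteq[\frac12,1]\times[-C(\omega),C(\omega)]$, a region on which $T$ is bi-Lipschitz, and by construction $G_Y([\frac12,1])(\omega)=T\bigl(Z([\frac12,1])(\omega)\bigr)$. Invoking the invariance of Hausdorff dimension under bi-Lipschitz maps (Lemma 1.8 in \cite{Fal1}) then gives the pathwise identity $\dim_{\rm H}G_Y([\frac12,1])(\omega)=\dim_{\rm H}Z([\frac12,1])(\omega)$ for every $\omega\in\Omega$.

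Finally, Theorem \ref{dimXup} asserts $\dim_{\rm H}Z([\frac12,1])\leq1$ almost surely, so combining this with the identity of the previous step yields $\dim_{\rm H}G_Y([\frac12,1])\leq1$ almost surely. The substantive content is entirely in Theorem \ref{dimXup}; the present argument is only a change of coordinates. The single point deserving a word of care is that the bi-Lipschitz constant for $T$ is random, since it depends on $C(\omega)$. This causes no trouble, however, because the dimension identity is established separately for each fixed $\omega$ and no control uniform in $\omega$ is required — the exceptional null set comes solely from Theorem \ref{dimXup}. I therefore do not anticipate any genuine obstacle, beyond verifying that $T$ and $T^{-1}$ are both Lipschitz on the relevant rectangle, which is immediate from $t\in[\frac12,1]$.
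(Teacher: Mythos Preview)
Your proposal is correct and follows essentially the same approach as the paper: define $T(t,x)=(t,\,t^{-1}x-\log_2 t)$, observe it is bi-Lipschitz on $[\tfrac12,1]\times[-C,C]$, use the c\`adl\`ag property to bound $X(\cdot)(\omega)$ on $[\tfrac12,1]$ by some $C(\omega)$, and then invoke Lemma~1.8 of \cite{Fal1} to transfer Theorem~\ref{dimXup} pathwise to $G_Y$. The paper's argument is identical in structure and detail, including the remark that the bi-Lipschitz constant may depend on $\omega$ without affecting the conclusion.
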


To prove the lower bound for the Hausdorff dimension of the graph we can use the same technique as for the lower bound in case of the range of $Y$.

\begin{theorem}\label{dimHGlow}
 We have $\dim_{\rm H} G_Y([\frac{1}{2},1])\geq1$ almost surely.
\end{theorem}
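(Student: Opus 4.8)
The plan is to reuse the potential-theoretic (Frostman) argument from the proof of Lemma~\ref{dimrangeRlow}, now carried out in $\rr^2$ rather than on the line. Since $G_Y([\tfrac12,1])\subseteq\rr^2$, the relation \eqref{dimrel} only gives the a priori bound $\dim_{\rm H}G_Y([\tfrac12,1])\le2$, so a genuine lower estimate is still required; I would obtain it by exhibiting a mass distribution on the graph whose $\gamma$-energy is finite for every $\gamma\in(0,1)$.

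First I would let $\mu$ be the image of the probability measure $\sigma=2\lambda|_{[\frac12,1]}$ under the map $t\mapsto(t,Y(t))$, so that $\mu$ is a (random) probability measure carried by $G_Y([\tfrac12,1])$. By Frostman's theorem it then suffices to show that, for each fixed $\gamma\in(0,1)$,
\[
\int_{\frac12}^1\int_{\frac12}^1\Exp\bigl[\,\|(s,Y(s))-(t,Y(t))\|^{-\gamma}\,\bigr]\,ds\,dt<\infty,
\]
because this forces the $\gamma$-energy $\iint\|x-y\|^{-\gamma}\,\mu(dx)\,\mu(dy)$ to be finite almost surely, whence $\dim_{\rm H}G_Y([\tfrac12,1])\ge\gamma$ a.s.; letting $\gamma\uparrow1$ along a countable sequence then yields $\dim_{\rm H}G_Y([\tfrac12,1])\ge1$ a.s. The key (and essentially trivial) step is the domination of the kernel: since the Euclidean norm on $\rr^2$ dominates each coordinate,
\[
\|(s,Y(s))-(t,Y(t))\|^{-\gamma}\le\min\bigl\{\,|s-t|^{-\gamma},\;|Y(s)-Y(t)|^{-\gamma}\,\bigr\}.
\]
Bounding by the second term and integrating reduces the display above to exactly the integral \eqref{Frostman}, which was already proved finite in Lemma~\ref{dimrangeRlow}; alternatively, bounding by the first term reduces it to $\int_{1/2}^1\int_{1/2}^1|s-t|^{-\gamma}\,ds\,dt$, which is finite precisely because $\gamma<1$ (this is the admissibility of $\sigma$ noted in that proof). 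Either route closes the argument.

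I do not anticipate a real obstacle here: this is the easy half of the dimension computation. In fact the lower bound is essentially forced by the domain, because the coordinate projection $(t,y)\mapsto t$ is $1$-Lipschitz and maps $G_Y([\tfrac12,1])$ onto $[\tfrac12,1]$, so $\dim_{\rm H}G_Y([\tfrac12,1])\ge\dim_{\rm H}[\tfrac12,1]=1$ deterministically; phrasing it through Frostman's theorem is a matter of keeping the exposition parallel to the range case. The genuinely delicate work lies elsewhere — in controlling the negative moments $\Exp[\,|Y(s)-Y(t)|^{-\gamma}\,]$ via the smooth densities $g_r$ and the semi-selfsimilarity rescaling in Lemma~\ref{dimrangeRlow}, and in the matching upper bound obtained in Theorem~\ref{dimXup} through the Pruitt--Taylor covering lemma.
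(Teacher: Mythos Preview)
Your proposal is correct, and your primary route—bounding $\|(s,Y(s))-(t,Y(t))\|^{-\gamma}$ above by $|Y(s)-Y(t)|^{-\gamma}$ and invoking the finiteness of \eqref{Frostman} from Lemma~\ref{dimrangeRlow}—is exactly the paper's proof. Your further remark that the $1$-Lipschitz projection $(t,y)\mapsto t$ already forces $\dim_{\rm H}G_Y([\tfrac12,1])\ge\dim_{\rm H}[\tfrac12,1]=1$ deterministically is a valid and strictly simpler shortcut that the paper does not use.
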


\begin{proof}
Let $0<\gamma<1$. By \eqref{Frostman} we get
\begin{align*}
&\int_{\frac{1}{2}}^{1}\int_{\frac{1}{2}}^{1}\Exp\left[\|(s,Y(s))^\top-(t,Y(t))^\top\|^{-\gamma}\right]ds\,dt\\
&\quad= \int_{\frac{1}{2}}^{1}\int_{\frac{1}{2}}^{1}\Exp\left[\left((s-t)^2+(Y(s)-Y(t))^2\right)^{-\frac{\gamma}{2}}\right]ds\,dt\\
&\quad\leq \int_{\frac{1}{2}}^{1}\int_{\frac{1}{2}}^{1}\Exp\left[|Y(s)-Y(t)|^{-\gamma}\right]ds\,dt<\infty.
\end{align*}
The rest of the proof runs exactly as the proof of Lemma \ref{dimrangeRlow}.
\end{proof}

With similar techniques it is also possible to proof the following dimension result for the box-counting dimension of the graph of the St.\ Petersburg process $Y$.

\begin{theorem}\label{dimBG}
 We have $\dim_{\rm B}G_Y([\frac{1}{2},1])=1$ almost surely.
\end{theorem}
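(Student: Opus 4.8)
The plan is to combine the bounds already available with a Borel--Cantelli argument. By \eqref{dimrel} and Theorem \ref{dimHGlow} we have $\diml_{\rm B}G_Y([\tfrac12,1])\geq\dim_{\rm H}G_Y([\tfrac12,1])\geq1$ almost surely, so only the matching upper bound $\dimu_{\rm B}G_Y([\tfrac12,1])\leq1$ a.s.\ has to be established. As in the paragraph preceding Theorem \ref{dimHGup}, upper box-counting dimension is invariant under bi-Lipschitz maps; applying this to the bi-Lipschitz map $T$ on the almost surely bounded strip $[\tfrac12,1]\times[-C(\omega),C(\omega)]$ containing the c\`adl\`ag path of $X$, it suffices to show $\dimu_{\rm B}Z([\tfrac12,1])\leq1$ almost surely for $Z(t)=(t,X(t))$.

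First I would turn Lemma \ref{sojour} and the Pruitt--Taylor covering lemma into a first-moment estimate on the number of small cubes visited. Fix a $K_1$-nested family $\Lambda(a)$ of cubes of side $a\in(0,1]$. Applying the covering lemma with $u=0$, $s=1$ and then Lemma \ref{sojour} (valid since $0<\tfrac a3\leq1$ and $\tfrac{a}{3\sqrt2}\leq1$) gives
$$\Exp[M_0(a,1)]\leq2K_1\bigl(\Exp[T(\tfrac a3,1)]\bigr)^{-1}\leq\frac{2K_1}{K\,a/3}=:\frac{C}{a},$$
with $C=6K_1/K$ not depending on $a$. The cubes counted by $M_0(a,1)$ cover $Z([0,1])\supseteq Z([\tfrac12,1])$, and each such cube of side $a$ is contained in a single ball of radius $a$, so $N_a(Z([\tfrac12,1]))\leq M_0(a,1)$.

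Next I would pass to an almost sure bound. Fix $\varepsilon>0$ and set $a=2^{-n}$. Markov's inequality yields $\mathbb{P}(M_0(2^{-n},1)\geq2^{n(1+\varepsilon)})\leq C\,2^{-n\varepsilon}$, which is summable, so by the Borel--Cantelli lemma almost surely $N_{2^{-n}}(Z([\tfrac12,1]))\leq M_0(2^{-n},1)\leq2^{n(1+\varepsilon)}$ for all sufficiently large $n$. For $\delta\in[2^{-(n+1)},2^{-n}]$ one has $N_\delta\leq N_{2^{-(n+1)}}$, and comparing logarithms as $n\to\infty$ this gives $\dimu_{\rm B}Z([\tfrac12,1])\leq1+\varepsilon$ almost surely. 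Intersecting these full-probability events along a sequence $\varepsilon_k\downarrow0$ gives $\dimu_{\rm B}Z([\tfrac12,1])\leq1$ a.s., hence $\dimu_{\rm B}G_Y([\tfrac12,1])\leq1$ a.s.; together with the lower bound this is $\dim_{\rm B}G_Y([\tfrac12,1])=1$ a.s.

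The bulk of the work reuses ingredients already in place (Lemma \ref{sojour}, the Pruitt--Taylor lemma, bi-Lipschitz invariance of $\dimu_{\rm B}$), so the only genuinely delicate point — the one I expect to be the main obstacle — is upgrading the single expectation bound $\Exp[M_0(a,1)]\leq C/a$ to an almost sure statement valid uniformly over all scales: one must run the Borel--Cantelli argument along a dyadic sequence, control $N_\delta$ at intermediate scales by the dyadic ones, and check that the transfer through $T$ is legitimate on the random strip $[\tfrac12,1]\times[-C(\omega),C(\omega)]$. It is worth recording explicitly that the constant $K$ of Lemma \ref{sojour}, and hence $C$, does not depend on $a$, since this uniformity is exactly what makes the Markov estimate summable.
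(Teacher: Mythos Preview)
Your proposal is correct and follows essentially the same approach as the paper: the lower bound via \eqref{dimrel} and Theorem~\ref{dimHGlow}, the reduction to $\dimu_{\rm B}Z([\tfrac12,1])\leq1$ via the bi-Lipschitz map $T$, and the combination of Lemma~\ref{sojour} with the Pruitt--Taylor covering lemma are exactly what the paper invokes. The paper merely states that ``similar arguments as in the proof of Theorem~\ref{dimXup}'' give the upper box-counting bound and points to Lemma~3.5 of \cite{MX}; what you have written is precisely the standard Borel--Cantelli upgrade of the first-moment bound $\Exp[M_0(a,1)]\leq C/a$ to an almost sure covering-number estimate along dyadic scales, which is the content of that reference.
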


\begin{proof}
The lower bound follows directly from the almost sure inequalities
$$
1\leq\dim_{\rm H}G_Y([\tfrac{1}{2},1])\leq\diml_{\rm B}G_Y([\tfrac{1}{2},1])\leq\dimu_{\rm B}G_Y([\tfrac{1}{2},1]).
$$
For the upper bound it is now sufficient to verify $\dimu_{\rm B}G_Y([\frac{1}{2},1])\leq1$ almost surely.
With similar arguments as in the proof of Theorem \ref{dimXup} we can show that $\dimu_{\rm B}Z([\frac{1}{2},1])\leq1$ almost surely; see also the proof of Lemma 3.5 in \cite{MX}.
With the bi-Lipschitz invariance of the upper box-counting dimension (see section 3.2 in \cite{Fal3}) the proof concludes.
\end{proof}

\begin{remark}
If one prefers to flip an unfair coin this naturally leads to so called generalized St.\ Petersburg games as treated in \cite{CK,Gut,Pap}. Let $p\in(0,1)$ be the probability of the coin falling heads and let $q=1-p$. Then a gain of $p^{-1}q^{1-T}$ in a single St.\ Petersburg game results in the limit theorem
$$\frac{S_{k(n)}-k(n)\log_{1/q}k(n)}{k(n)}\to Y(t)=t^{-1}(X(t)-t\log_{1/q}t)$$
in distribution, whenever
$$q^{\lceil \log_{1/q}k(n)\rceil}k(n)\to t\in[q,1],$$
where $\{X(t)\}_{t\geq0}$ is a semistable L\'evy process with the semi-selfsimilarity property
$$X(q^kt)\eqd q^k(X(t)+kt)\quad\text{ for every }k\in\ganz\text{ and }t\geq0.$$
We emphasize that with the above techniques our Theorems \ref{dimrangeR}, \ref{dimHGup}, \ref{dimHGlow} and \ref{dimBG} also hold for the process $\{Y(t)\}_{t\in[q,1]}$ in this generalized situation when replacing the interval by $[q,1]$.
\end{remark}

\section{Hausdorff dimension of the Steinhaus sequence}

Recall the definition of the Steinhaus sequence $(x_n)_{n\in\nat}$ given in the Introduction. The asymptotic properties of $(x_n)_{n\in\nat}$ have been analyzed in full detail by Cs\"org\H{o} and Simons \cite{CS1}. Let $s(n)=x_1+\cdots+x_n$ and $\gamma_n=n\cdot 2^{-\lceil\log_2n\rceil}\in(\frac12,1]$ then by Theorem 3.3 in \cite{CS1} we have for any $n\in\nat$
\begin{equation}\label{asymptSs}
\frac{s(n)-n\log_2n}{n}=\xi(\gamma_n),
\end{equation}
where the function $\xi:[\frac12,1]\to[0,2]$ is defined by
$$\xi(\gamma)=2-\log_2\gamma-\frac1{\gamma}\sum_{k=1}^\infty\frac{k\varepsilon_k}{2^k}$$
and the sequence $(\varepsilon_k)_{k\in\nat}\subseteq\{0,1\}$ is given by the dyadic expansion $\gamma=\sum_{k=0}^\infty\frac{\varepsilon_k}{2^k}$ of $\gamma\in[\frac12,1]$ with the convention that $\varepsilon_k=0$ for infinitely many $k\in\nat$. By Theorem 3.1 in \cite{CS1} the function $\xi$ is c\`adl\`ag with $\xi(\frac12)=2=\xi(1)$ and has jumps precisely at the dyadic rationals in $(\frac12,1]$. All these jumps are upward and the largest jump occurs from $\xi(1-)=0$ to $\xi(1)=2$. The graph of $\xi$ seems to inhere fractal properties as can be seen in Figure \ref{gxi} below, a replication of Figure 1 in \cite{CS1}. 
\begin{figure}[h]
\includegraphics[scale=0.5]{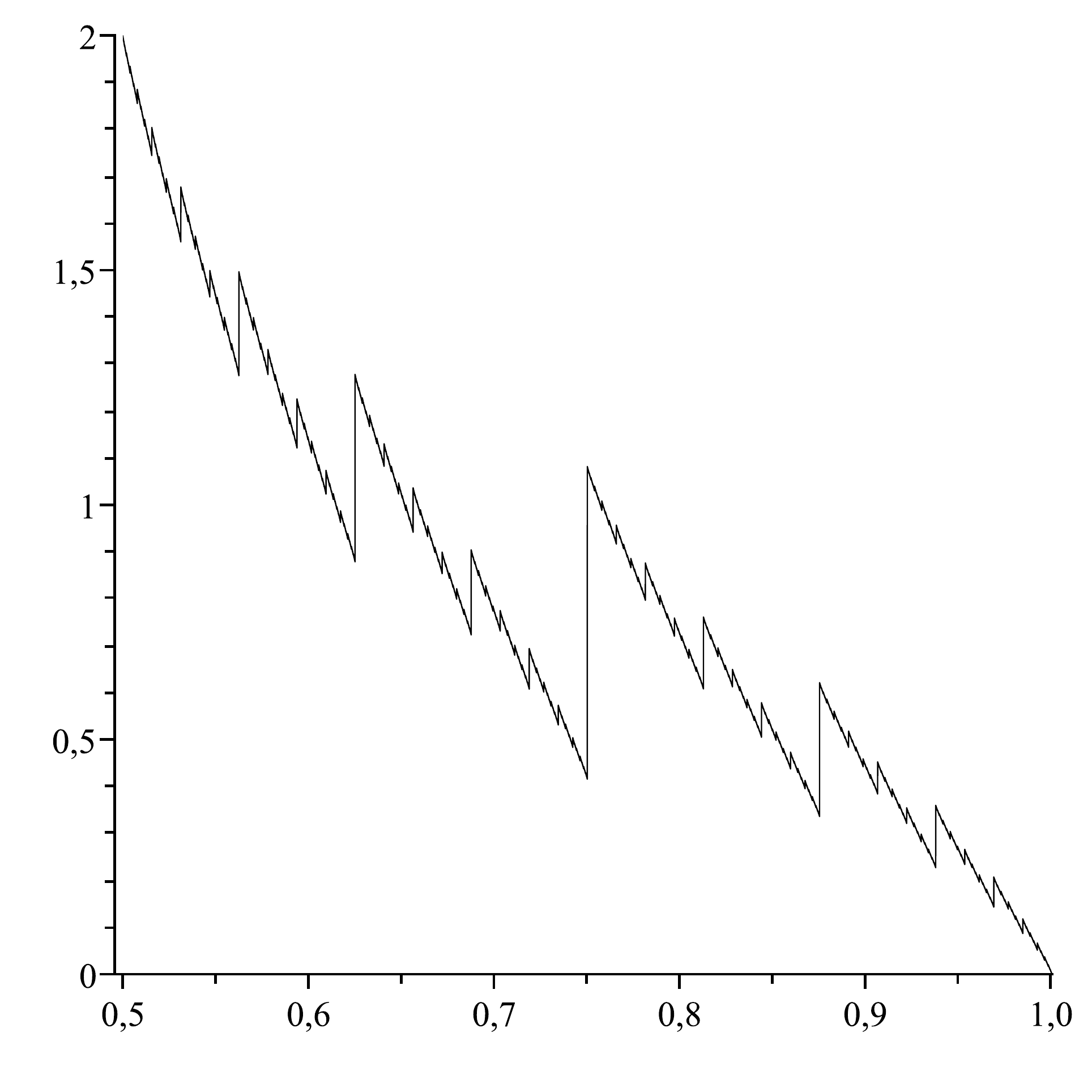}
\caption{\small Graph of $\xi$ on the interval $[\frac12,1)$. For better visibility the jumps of $\xi$ are shown as vertical lines.}
\label{gxi}
\end{figure}
It follows directly from \eqref{asymptSs} that the sequence $(s(n))_{n\in\nat}$ of total gains satisfies the asymptotic property of Feller
\begin{equation}\label{FellerSs}
\frac{s(n)}{n\log_2n}\to1
\end{equation}
as $n\to\infty$; see \cite{CS1}. Moreover, for any sequence $k_n\to\infty$ with  $k_n\cdot 2^{-\lceil\log_2k_n\rceil}\to\gamma\in[\frac12,1]$ we get from \eqref{asymptSs}
\begin{equation*}
\LIM\left\{\frac{s(k_n)-k_n\log_2k_n}{k_n}:\,n\in\nat\right\}=\{\xi(\gamma), \xi(\gamma-)\},
\end{equation*}
where $\LIM$ denotes the set of accumulation points. Hence we may consider the function $\xi$ as the corresponding limiting sample path of $\{Y(t)\}_{t\in[\frac12,1]}$. Note that \eqref{FellerSs} shows that the Steinhaus sequence is an exceptional sequence of gains when considering almost sure limit behavior, since Feller's law of large numbers does not hold in an almost sure sense. According to classical results in \cite{CR, Adl, CS2} it is known that
\begin{equation}\label{ACR}
\limsup_{n\to\infty}\frac{S_n}{n\log_2n}=\infty\quad\text{ and }\quad\liminf_{n\to\infty}\frac{S_n}{n\log_2n}=1\quad\text{ almost surely.}
\end{equation}
More precisely, by Corollary 1 in \cite{Var2} we have $\LIM\{S_n/(n\log_2n):\,n\in\nat\}=[1,\infty]$ almost surely,
but there is a version of the strong law of large numbers by \cite{CS3} when neglecting the largest gain
$$\frac{S_n-\max_{1\leq k\leq n}X_k}{n\log_2n}\to1\quad\text{ almost surely.}$$
A comparison of \eqref{FellerSs} and \eqref{ACR} shows that the Steinhaus sequence belongs to an exceptional nullset concerning almost sure limit behavior of the total gain in repeated St.\ Petersburg games. We will now show that the Steinhaus sequence is not exceptional concerning the local fluctuations of the limiting sample paths measured by the Hausdorff or box-counting dimension.

It follows directly from the above stated properties of $\xi$ given in Theorem 3.1 of \cite{CS1} that the range $\xi([\frac12,1])$ is equal to the interval $(0,2]$ and hence $\dim_{\rm H}\xi([\frac12,1])=1$ by Theorem 1.12 in \cite{Fal1}. This shows that $\dim_{\rm H}\xi([\frac12,1])$ coincides with the Hausdorff dimension of the range of a typical sample path of $\{Y(t)\}_{t\in[\frac12,1]}$. Clearly, by \eqref{dimrel} we also have $\dim_{\rm H}\xi([\frac12,1])=\dim_{\rm B}\xi([\frac12,1])=1$. A look at Figure \ref{gxi} suggests that it is merely the graph and not the range of $\xi$ that should inhere fractal properties. In the sequel we will argue that also the graph $G_\xi([\frac12,1])$ is typical concerning the almost sure dimension properties of the sample graph of $\{Y(t)\}_{t\in[\frac12,1]}$. 
To this aim we will again apply the bi-Lipschitz function $T$ from Section 2 whose inverse is given by $T^{-1}:[\frac12,1]\times[0,2]\to\Ima T^{-1}$ with $T^{-1}(t,x)=(t,t(x+\log_2t))^\top$. Applied to the graph of $\xi$ we get for any $\gamma\in[\frac12,1]$
$$T^{-1}(\gamma,\xi(\gamma))=\left(\begin{array}{c}
    \gamma \\ 
    \gamma(\xi(\gamma)+\log_2\gamma) \\ 
  \end{array}\right)=\left(\begin{array}{c}
    \gamma \\ 
    2\gamma-\sum_{k=1}^\infty\frac{k\varepsilon_k}{2^k} \\ 
  \end{array}\right)$$
and by bi-Lipschitz invariance we have
\begin{equation}\label{dimSs1}
\dim_{\rm H}G_\xi([\tfrac12,1])=\dim _{\rm H}T^{-1}(G_\xi([\tfrac12,1])).
\end{equation}
The same equality holds for upper and lower box-counting dimensions; e.g., see \cite{Fal3}.
The image $T^{-1}(G_\xi([\frac12,1)))$ is illustrated in Figure \ref{gTxi} and shows perfect selfsimilarity.
To see this, we may write $T^{-1}(\gamma,\xi(\gamma))=(\gamma,f(\gamma))^\top$ with $f(\gamma)=2\gamma-\sum_{k=1}^\infty\frac{k\varepsilon_k}{2^k}$.
\begin{figure}[h]
\includegraphics[scale=0.4]{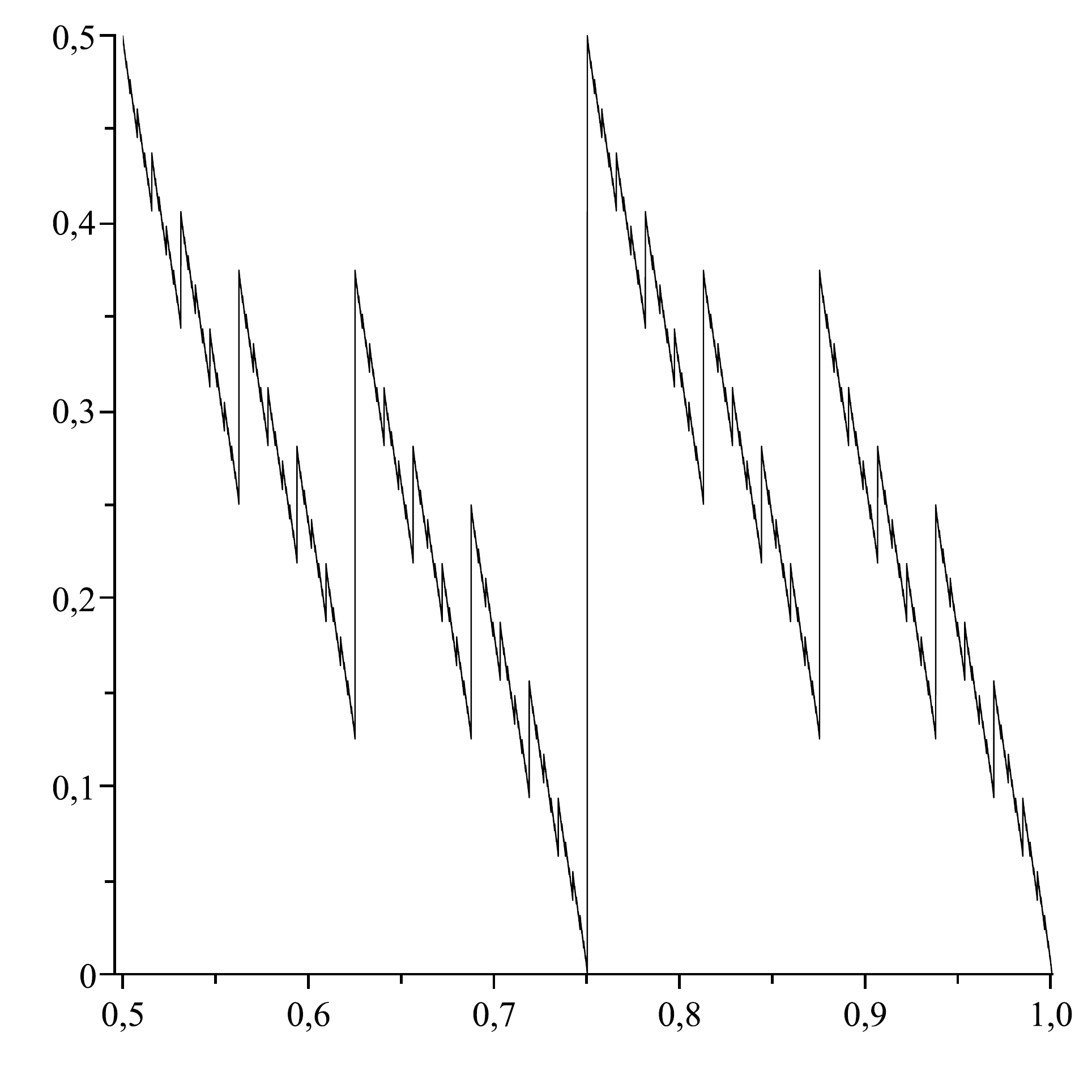}
\caption{\small Image of $T^{-1}(G_\xi[\frac12,1))$. For better visibility the jumps are shown as vertical lines.}
\label{gTxi}
\end{figure}
\begin{lemma}\label{selfsimSs}
Let $f$ be the function from above. Then for any $\gamma\in[\frac12,1)$ we have
$$f(\tfrac12\gamma+\tfrac12)=\tfrac12(1-\gamma+f(\gamma))=f(\tfrac12\gamma+\tfrac14).$$
\end{lemma}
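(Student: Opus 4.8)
The plan is to read the dyadic expansions of $\tfrac12\gamma+\tfrac12$ and $\tfrac12\gamma+\tfrac14$ directly off the expansion $\gamma=\sum_{k=0}^\infty\varepsilon_k2^{-k}$ and to substitute them into the definition $f(\gamma)=2\gamma-\sum_{k\ge1}k\varepsilon_k2^{-k}$. The key preliminary observation is that for $\gamma\in[\tfrac12,1)$ the non-terminating dyadic digits satisfy $\varepsilon_0=0$ and $\varepsilon_1=1$, so that $\gamma=\tfrac12+\sum_{k\ge2}\varepsilon_k2^{-k}$ and $\sum_{k\ge1}\varepsilon_k2^{-k}=\gamma$. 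Both image points lie again in $[\tfrac12,1)$ — in fact in $[\tfrac34,1)$ resp.\ $[\tfrac12,\tfrac34)$ — and inherit the convention that $\varepsilon_k=0$ for infinitely many $k$, so that the formula for $f$ applies to them without change.

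For $\gamma':=\tfrac12\gamma+\tfrac12$ one has $\tfrac12\gamma=\sum_{k\ge1}\varepsilon_{k-1}2^{-k}$, and since $\varepsilon_0=0$ the dyadic digits of $\gamma'$ are $\varepsilon'_0=0$, $\varepsilon'_1=1$ and $\varepsilon'_k=\varepsilon_{k-1}$ for $k\ge2$. Plugging this into $f(\gamma')=2\gamma'-\sum_{k\ge1}k\varepsilon'_k2^{-k}$, reindexing the sum by $j=k-1$ and using $\sum_{j\ge1}(j+1)\varepsilon_j2^{-j}=\sum_{j\ge1}j\varepsilon_j2^{-j}+\gamma$, a one-line computation yields $f(\gamma')=\tfrac12\gamma+\tfrac12-\tfrac12\sum_{k\ge1}k\varepsilon_k2^{-k}$. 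Since $\sum_{k\ge1}k\varepsilon_k2^{-k}=2\gamma-f(\gamma)$ by the definition of $f$, this equals $\tfrac12(1-\gamma+f(\gamma))$, which is the first claimed identity.

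For $\gamma'':=\tfrac12\gamma+\tfrac14$ the digit $\varepsilon_1=1$ of $\gamma$ produces a term $2^{-2}$ that merges with the added $\tfrac14$, so the digits of $\gamma''$ are $\varepsilon''_0=0$, $\varepsilon''_1=1$, $\varepsilon''_2=0$ and $\varepsilon''_k=\varepsilon_{k-1}$ for $k\ge3$. Running the same substitution-and-reindexing, the only differences from the previous case are the extra bookkeeping of the $k=1,2$ digits and of the now-missing $j=1$ contributions in the tail sums (which is exactly where $\varepsilon_1=1$ enters); after cancellation one arrives at the \emph{same} expression $f(\gamma'')=\tfrac12\gamma+\tfrac12-\tfrac12\sum_{k\ge1}k\varepsilon_k2^{-k}=\tfrac12(1-\gamma+f(\gamma))$. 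Comparing the two computations gives both equalities simultaneously.

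There is no analytic difficulty here; the one place to be careful — and the only place the argument could go wrong if carried out hastily — is the bookkeeping of the index shifts and of the first one or two dyadic digits, together with checking that $\gamma\mapsto\tfrac12\gamma+\tfrac12$ and $\gamma\mapsto\tfrac12\gamma+\tfrac14$ indeed land in $[\tfrac12,1)$ and preserve the non-terminating-expansion convention (so that the digit identities above are the genuine dyadic expansions).
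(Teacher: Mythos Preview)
Your proof is correct and follows essentially the same route as the paper: read off the dyadic digits of $\tfrac12\gamma+\tfrac12$ and $\tfrac12\gamma+\tfrac14$ from those of $\gamma$ (using $\varepsilon_1=1$), substitute into the defining series for $f$, reindex, and simplify to $\tfrac12(1-\gamma+f(\gamma))$. Your added remark that both image points lie in $[\tfrac12,1)$ and inherit the non-terminating convention is a useful sanity check that the paper leaves implicit.
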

\begin{proof}
For the dyadic expansion $\gamma=\sum_{k=1}^\infty\frac{\varepsilon_k}{2^k}$ of $\gamma\in[\frac12,1)$ we necessarily have $\varepsilon_1=1$. Consequently,
\begin{align*}
\tfrac12\gamma+\tfrac12=\tfrac12+\sum_{k=1}^\infty\frac{\varepsilon_k}{2^{k+1}}=\sum_{k=1}^\infty\frac{\varepsilon'_k}{2^k} & \quad\text{ with }\quad\varepsilon'_k=\begin{cases} 1 & k=1,\\ \varepsilon_{k-1} & k\geq2
\end{cases}\\
\intertext{and}
\tfrac12\gamma+\tfrac14=\tfrac14+\sum_{k=1}^\infty\frac{\varepsilon_k}{2^{k+1}}=\sum_{k=1}^\infty\frac{\varepsilon''_k}{2^k} & \quad\text{ with }\quad\varepsilon''_k=\begin{cases} 1 & k=1,\\ 0 & k=2,\\ \varepsilon_{k-1} & k\geq3.
\end{cases}
\end{align*}
It follows that 
\begin{align*}
f(\tfrac12\gamma+\tfrac12) & =2(\tfrac12\gamma+\tfrac12)-\sum_{k=1}^\infty\frac{k\varepsilon'_k}{2^k}=\gamma+\tfrac12-\sum_{k=2}^\infty\frac{k\varepsilon_{k-1}}{2^k}\quad\text{ and}\\
f(\tfrac12\gamma+\tfrac14) & =2(\tfrac12\gamma+\tfrac14)-\sum_{k=1}^\infty\frac{k\varepsilon''_k}{2^k}=\gamma-\sum_{k=3}^\infty\frac{k\varepsilon_{k-1}}{2^k}=\gamma+\tfrac12-\sum_{k=2}^\infty\frac{k\varepsilon_{k-1}}{2^k}.
\end{align*}
This shows $f(\tfrac12\gamma+\tfrac12)=f(\tfrac12\gamma+\tfrac14)=\gamma+\tfrac12-\sum_{k=2}^\infty\frac{k\varepsilon_{k-1}}{2^k}$ and furthermore we get
\begin{align*}
\gamma+\tfrac12-\sum_{k=2}^\infty\frac{k\varepsilon_{k-1}}{2^k} & =\gamma+\tfrac12-\tfrac12\sum_{k=1}^\infty\frac{(k+1)\varepsilon_{k}}{2^k}\\
& =\gamma+\tfrac12-\tfrac12\sum_{k=1}^\infty\frac{k\varepsilon_{k}}{2^k}-\tfrac12\gamma=\tfrac12(1-\gamma+f(\gamma))
\end{align*}
concluding the proof.
\end{proof}
Let $T_0,T_1:[\frac12,1]\times[0,\frac12]\to[\frac12,1]\times[0,\frac12]$ be the affine contractions given by
\begin{align*}
T_0(x,y) & =\left(  \begin{array}{c} \frac12 x+\frac14 \\ \frac12(1-x+y) \end{array}\right)
  =\left(  \begin{array}{cc}
    1/2 & 0 \\ 
    -1/2 &1/2
  \end{array}\right)\left(  \begin{array}{c}
    x \\ 
    y
  \end{array}\right)+
\left(  \begin{array}{c}
    1/4 \\ 
    1/2
  \end{array}\right),\\
T_1(x,y) & =\left(  \begin{array}{c} \frac12 x+\frac12 \\ \frac12(1-x+y) \end{array}\right)
  =\left(  \begin{array}{cc}
    1/2 & 0 \\ 
    -1/2 &1/2
  \end{array}\right)\left(  \begin{array}{c}
    x \\ 
    y
  \end{array}\right)+
\left(  \begin{array}{c}
    1/2 \\ 
    1/2
  \end{array}\right).
\end{align*}
Then it follows from Lemma \ref{selfsimSs} that for any $\gamma\in[\frac12,1)$
\begin{align*}
T^{-1}(\tfrac12\gamma+\tfrac14, \xi(\tfrac12\gamma+\tfrac14)) &  =T_0(\gamma,f(\gamma))=T_0\big(T^{-1}(\gamma,\xi(\gamma))\big)\quad\text{ and}\\
T^{-1}(\tfrac12\gamma+\tfrac12, \xi(\tfrac12\gamma+\tfrac12)) &  =T_1(\gamma,f(\gamma))=T_1\big(T^{-1}(\gamma,\xi(\gamma))\big).
\end{align*}
These contraction properties are illustrated in Figure \ref{congTxi} and show that the image $T^{-1}(G_\xi([\frac12,1)))$ can be generated by an iterated function system.
\begin{figure}[h]
\includegraphics[scale=0.35]{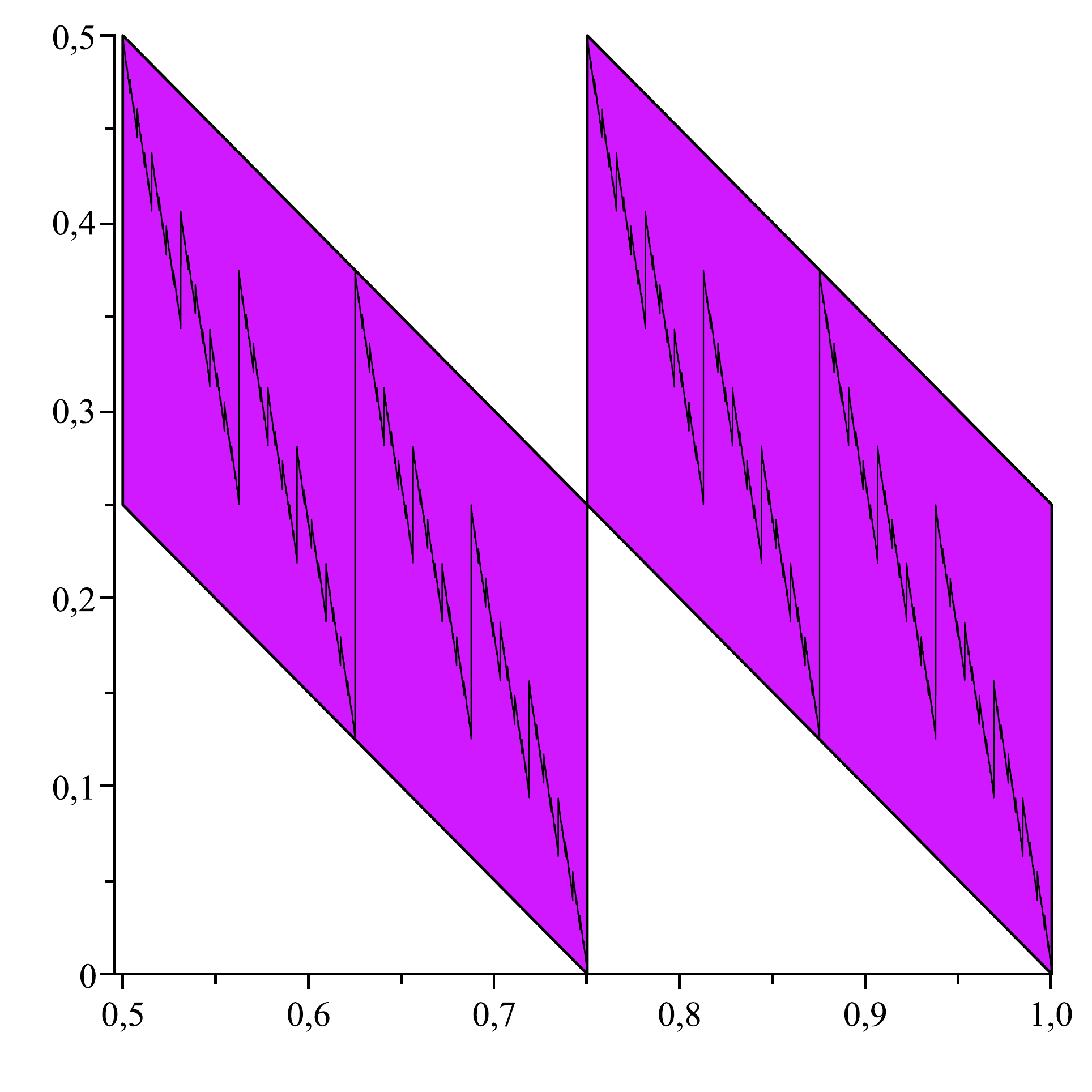}
\includegraphics[scale=0.35]{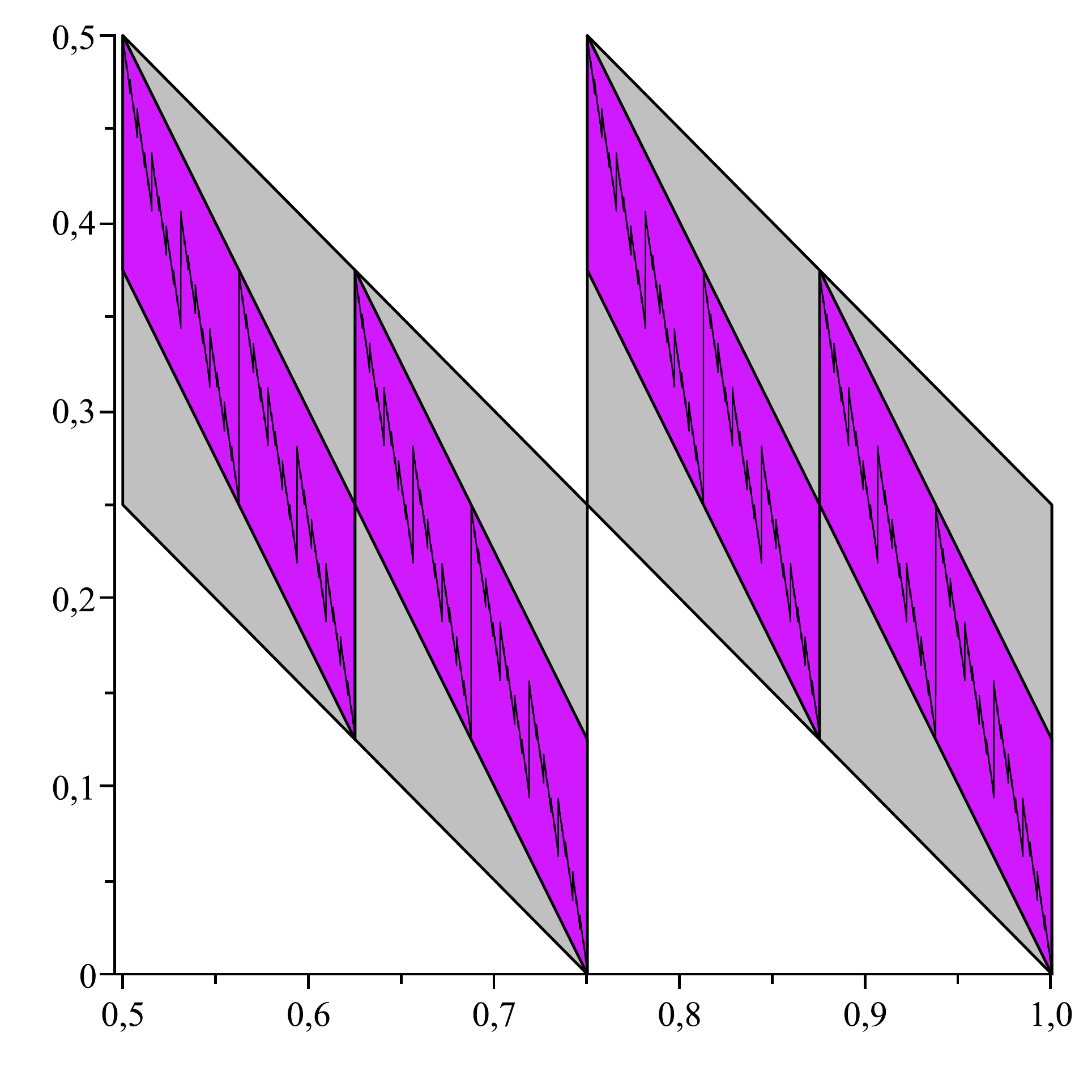}
\caption{\small Contractions generating the image (left) and their first iterates (right).}
\label{congTxi}
\end{figure}
By Hutchinson \cite{Hut} there exists a unique non-empty compact set $F\subseteq[\frac12,1]\times[0,\frac12]$, called the attractor, such that $F=T_0(F)\cup T_1(F)$ which fulfills 
$$F=\bigcap_{r=1}^\infty\,\,\bigcup_{(i_1,\ldots,i_r)\in\{0,1\}^r}T_{i_1}\circ\cdots\circ T_{i_r}([\tfrac12,1]\times[0,\tfrac12]).$$
Our construction shows that for $\gamma\in[\frac12,1)$ with dyadic expansion $\gamma=\sum_{k=1}^\infty\frac{\varepsilon_k}{2^k}$ we have $\varepsilon_1=1$ and
$$d\big(T_{\varepsilon_2}\circ\cdots\circ T_{\varepsilon_r}([\tfrac12,1]\times[0,\tfrac12]), T^{-1}(\gamma,\xi(\gamma))\big)\to0$$
as $r\to\infty$, where $d(A,x)=\inf\{\|y-x\|:\,y\in A\}$ for $A\subseteq\rr^2$ and $x\in\rr^2$. Since we required $\varepsilon_k=0$ for infinitely many $k\in\nat$, the only limit points missing are those with $T_{i_j}=T_1$ for all but finitely many $j\in\nat$. For these we have
$$d\big(T_{i_1}\circ\cdots\circ T_{i_r}([\tfrac12,1]\times[0,\tfrac12]), T^{-1}(\gamma,\xi(\gamma-))\big)\to0$$
for a dyadic rational $\gamma\in(\frac12,1]$. The above arguments show that $F$ is the closure of $T^{-1}(G_\xi([\frac12,1)))$ and since the dyadic rationals are countable, by elementary properties of the Hausdorff dimension and \eqref{dimSs1} we get
\begin{equation}\label{dimHBF}
\dim_{\rm H}F=\dim_{\rm H}T^{-1}(G_\xi([\tfrac12,1)))=\dim_{\rm H}G_\xi([\tfrac12,1]).
\end{equation}
The same equality holds for upper and lower box-counting dimensions; e.g., see \cite{Fal3}.

A common way to calculate the fractal dimension of the self-affine invariant set $F$ is by means of the singular value function. For on overview of such methods we refer to \cite{Fal4}. The linear part of both affine mappings $T_0$ and $T_1$ is equal to the linear contraction with associated matrix
$$L=\left(  \begin{array}{cc}
    1/2 & 0 \\ 
    -1/2 &1/2
  \end{array}\right).$$
By induction one easily calculates for $r\in\nat$
$$L^r=\left(  \begin{array}{cc}
    1/2^r & 0 \\ 
    -r/2^r &1/2^r
  \end{array}\right)$$
and the singular values of $L^r$ are the positive roots of the eigenvalues of $(L^r)^\top L^r$ which calculate as
\begin{equation}\label{singval}
\alpha_1^{(r)}=\frac1{2^r}\sqrt{\frac{r^2+2+\sqrt{r^4+4r^2}}{2}}\quad\text{ and }\quad\alpha_2^{(r)}=\frac1{2^r}\sqrt{\frac{r^2+2-\sqrt{r^4+4r^2}}{2}}.
\end{equation}
These determine the singular value function of $L^r$ for $r\in\nat$ given by
$$\varphi^s(L^r)=\begin{cases}
(\alpha_1^{(r)})^s & \text{ for }0<s\leq1,\\
\alpha_1^{(r)}(\alpha_2^{(r)})^{s-1} & \text{ for }1<s\leq2.
\end{cases}$$
Now the affinity dimension of $F$ is defined by
\begin{equation}\label{AdimF}
\dim_{\rm A}F=\inf\Big\{s>0:\,\sum_{r=1}^\infty2^r\varphi^s(L^r)<\infty\Big\}
\end{equation}
and the special form of the singular values in \eqref{singval} shows that $\dim_{\rm A}F=1$.

Since the union $F=T_0(F)\cup T_1(F)$ is disjoint, by Proposition 2 in�\cite{Fal2a} we get a lower bound for the Hausdorff dimension of $F$
\begin{equation}\label{afflowb}
\dim_{\rm H}F\geq\inf\Big\{s>0:\,\sum_{r=1}^\infty2^r\big(\varphi^s(L^{-r})\big)^{-1}<\infty\Big\}.
\end{equation}
Again, by induction one easily calculates for $r\in\nat$
$$L^{-r}=\left(  \begin{array}{cc}
    2^r & 0 \\ 
    r\,2^r & 2^r
  \end{array}\right)$$
and the singular values of $L^{-r}$ are 
\begin{equation*}
\beta_1^{(r)}=2^r\sqrt{\frac{r^2+2+\sqrt{r^4+4r^2}}{2}}\quad\text{ and }\quad\beta_2^{(r)}=2^r\sqrt{\frac{r^2+2-\sqrt{r^4+4r^2}}{2}},
\end{equation*}
which shows that $\dim_{\rm H}F\geq1$ by \eqref{afflowb}. Since by \cite{Fal2} we have
$$\dim_{\rm H}F\leq\diml_{\rm B}F\leq\dimu_{\rm B}F\leq\dim_{\rm A}F,$$
altogether the above calculations show:
\begin{theorem}
We have $\displaystyle\dim_{\rm H}G_\xi([\tfrac12,1])=1=\dim_{\rm B}G_\xi([\tfrac12,1]).$
\end{theorem}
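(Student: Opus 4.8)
The plan is to combine the pieces already assembled in this section. The range part is immediate from the properties of $\xi$ recorded from Csőrgő–Simons: since $\xi$ is càdlàg with only upward jumps and $\xi(\frac12)=\xi(1)=2$ while $\xi(1-)=0$, the range $\xi([\frac12,1])$ equals $(0,2]$, an interval, so $\dim_{\rm H}\xi([\frac12,1])=\dim_{\rm B}\xi([\frac12,1])=1$. The substantive content is the graph, and here the strategy is to transport the problem via the bi-Lipschitz map $T^{-1}$ to the self-affine attractor $F$ of the iterated function system $\{T_0,T_1\}$, whose identification with $\overline{T^{-1}(G_\xi([\frac12,1)))}$ and hence the dimension equalities \eqref{dimHBF} have already been established; then one only needs $\dim_{\rm H}F=\dim_{\rm B}F=1$.

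Concretely, I would carry out the following steps in order. First, record the common linear part $L$ of $T_0$ and $T_1$, compute $L^r$ and $L^{-r}$ by induction, and read off their singular values as in \eqref{singval}. Second, write down the singular value function $\varphi^s(L^r)$ and examine the affinity-dimension series $\sum_{r\ge1}2^r\varphi^s(L^r)$ from \eqref{AdimF}: for $0<s\le1$ the general term behaves like $2^r\cdot (r\,2^{-r})^{s}=r^{s}2^{(1-s)r}$ up to bounded factors, which diverges for $s<1$ (and for $s=1$), while for $s>1$ the term involves $\alpha_2^{(r)}\approx 2^{-r}/r$ and the series converges; hence $\dim_{\rm A}F=1$. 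Third, invoke the disjointness of the union $F=T_0(F)\cup T_1(F)$ to apply Proposition 2 of \cite{Fal2a}, giving the lower bound \eqref{afflowb}; evaluating the corresponding series with the singular values $\beta_1^{(r)},\beta_2^{(r)}$ of $L^{-r}$ shows the infimum there is $1$, so $\dim_{\rm H}F\ge1$. Fourth, combine with the chain $\dim_{\rm H}F\le\diml_{\rm B}F\le\dimu_{\rm B}F\le\dim_{\rm A}F=1$ from \cite{Fal2} to conclude $\dim_{\rm H}F=\dim_{\rm B}F=1$, and then transfer back through \eqref{dimHBF} (and its box-counting analogue) to obtain the claimed equalities for $G_\xi([\frac12,1])$.

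The main obstacle — really the only place requiring care — is verifying that the disjointness hypothesis of Proposition 2 in \cite{Fal2a} genuinely holds, i.e.\ that $T_0(F)\cap T_1(F)=\emptyset$: from the explicit formulas $T_0$ maps into $[\frac14+\frac14,\frac14+\frac12]\times\cdots=[\frac12,\frac34]\times\cdots$ wait, one checks that the first coordinates land in $[\frac12,\frac34]$ for $T_0$ and in $[\frac34,1]$ for $T_1$, so the images can only meet at the vertical line $\{x=\frac34\}$; one must then check this boundary overlap is empty for the actual attractor $F$ (equivalently, that $\xi$ has no jump forcing a coincidence there), which follows from our dyadic-expansion description since $\frac34=(0.11)_2$ is approached only from within one branch. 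Everything else is routine linear algebra and comparison of series; no probability enters, since we are dealing with the single deterministic path $\xi$.
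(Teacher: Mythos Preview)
Your proposal is correct and follows essentially the same route as the paper: transfer via the bi-Lipschitz map $T^{-1}$ to the self-affine attractor $F$, compute $\dim_{\rm A}F=1$ from the singular values \eqref{singval}, obtain $\dim_{\rm H}F\ge1$ from Proposition~2 of \cite{Fal2a} using disjointness and \eqref{afflowb}, and sandwich via the Falconer chain of inequalities. One small remark: your justification of disjointness (``$\tfrac34=(0.11)_2$ is approached only from within one branch'') is not quite the right mechanism---both $T_0$ and $T_1$ do produce points with first coordinate $\tfrac34$ (from $x=1$ and $x=\tfrac12$ respectively), but the images are $(\tfrac34,0)$ and $(\tfrac34,\tfrac12)$, which are distinct; the paper itself simply asserts the disjointness without comment.
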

This shows that the graph of $\xi$, being the limiting object of the Steinhaus sequence (considered as a possible sequence of total gains in repeated St.\ Petersburg games), is not exceptional concerning the Hausdorff or box-counting dimension of the sample graph $G_Y([\frac12,1])$ calculated in Section 2.

\bibliographystyle{plain}

\end{document}